\title[Equivariant K-Theory of  Central  extensions]
{Equivariant  $K$-Theory  of  central  extensions  and  twisted  equivariant  $K$-Theory: $\sldrei$ and  $\stein$. }
\author{No\'{e} B\'{a}rcenas }
        \address{
Centro  de  Ciencias Matem\'aticas. \\ Universidad  Nacional Aut\'onoma  de  M\'exico \\Ap. Postal  61-3 Xangari\\ Morelia, Michoac\'an.  M\'EXICO 58089}
         \email{barcenas@matmor.unam.mx}
         \urladdr{http://www.matmor.unam.mx/~barcenas}
\author{Mario Vel\'asquez}
\address{Centro  de  Ciencias Matem\'aticas. \\ Universidad  Nacional Aut\'onoma  de  M\'exico \\ Ap. Postal 61-3 Xangari \\Morelia, Michoac\'an.  M\'EXICO 58089}
 \email{mavelasquezm@gmail.com}
         \date{\today}
\keywords{ Twisted  Equivariant  K-Theory, Bredon Cohomology,  Baum-Connes  Conjecture with coefficients, Twisted  Group $C^*$-algebras, KK-Theoretic  duality. 
2010 Math Subject  classification: Primary: 19L64, Secondary: 19L50,  19K33,  19L47.}
\DeclareMathAlphabet\EuR{U}{eur}{m}{n}
\SetMathAlphabet\EuR{bold}{U}{eur}{b}{n}
\theoremstyle{plain}
\newtheorem{theorem}{Theorem}[section]
\newtheorem{lemma}[theorem]{Lemma}
\newtheorem{corollary}[theorem]{Corollary}
\newtheorem*{theoremn}{Theorem}
\theoremstyle{definition}
\newtheorem{definition}[theorem]{Definition}
\newtheorem{remark}[theorem]{Remark}
\global\let\c@equation=\c@theorem}
\newcommand{\comsquare}[8]                   
{\begin{CD}
#1 @>#2>> #3\\
@V{#4}VV @V{#5}VV\\
#6 @>#7>> #8
\end{CD}
}
\newcommand{\xycomsquare}[8]                   
{\xymatrix
{#1 \ar[r]^{#2} \ar[d]^{#4} &
#3 \ar[d]^{#5}  \\
#6\ar[r]^{#7} &
#8
}
}
\newcommand{\calfin}{\mathcal{FIN}}
\newcommand{\UU}{\mathcal{U}}
\newcommand{\HH}{\mathcal{H}}
\newcommand{\KK}{\mathcal{K}}
\newcommand{ \calr}{\mathcal{R}}
\newcommand{\IC}{{\mathbb C}}
\newcommand{\IN}{{\mathbb N}}
\newcommand{\IZ}{{\mathbb Z}}
\newcommand{\curs}{\EuR}
\newcommand{\CHAINCOMPLEXES}{\curs{CHCOM}}
\newcommand{\MODULES}{\curs{MODULES}}
\newcommand{\Or}{\curs{Or}}
\newcommand{\colim}{\operatorname{colim}}
\newcommand{\Spin}{\operatorname{Spin}}
\newcommand{\sldrei}{SL_{3}{\mathbb{Z}}}
\newcommand{\stein}{St_3{\mathbb{Z}}}
\newcommand{\ktheory}[4]{{}^{#1}K_{{#2}}^{#3}{(#4)}}
\newcommand{\so}[2]{u_{#1}^ {#2}}
\newcommand{\eub}[1]{\underline{E}#1}              
\newcommand{\Orfin}{\Or_\calfin(G)}
\newcommand{\higherlim}[3]{{\setbox1=\hbox{\rm lim}
        \setbox2=\hbox to \wd1{\leftarrowfill} \ht2=0pt \dp2=-1pt
        \mathop{\vtop{\baselineskip=5pt\box1\box2}}
        _{#1}}^{#2}#3}
\newcommand{\version}[1]                       
{\begin{center} last edited on #1\\
last compiled on \today \\
name of texfile: \jobname
\end{center}
}
\newcounter{commentcounter}
\begin{document}

  \begin{abstract}
In this  work, we  compare twisted  Equivariant  K-theory of $\sldrei$ with untwisted  equivariant  $K$-Theory  of a   central  extension $\stein$.  We  compute  all twisted  equivariant $K$-theory  groups  of $\sldrei$,  and compare  with  previous work  on the equivariant  $K$- Theory  of  $B\stein$ by  Tezuka  and  Yagita.

Using a universal  coefficient  theorem  by the  authors, the  computations explained  here give  the domain of Baum-Connes assembly maps  landing  on the  topological  $K$-theory of  twisted group $C^*$-algebras related to  $\sldrei$, for  which a  version  of  $KK$-Theoretic Duality studied  by  Echterhoff, Emerson and  Kim  is  verified.

  \end{abstract}
  \maketitle
  
\section{Introduction}  
In this  note, we   compare  versions  of  twisted  equivariant  $K$-theory with  respect  to a  discrete  group $G$, and  \emph{untwisted} equivariant  $K$-theory  of a universal  central  extension  of  $G$. 

Given a discrete  group $G$,  a  proper $G$-CW  complex  $X$ and a cohomology  class  $\alpha$  in  the  third  Borel  cohomology  group $H^3(X \times_G EG, \mathbb{Z})$,  twisted  equivariant  $K$  theory, denoted  by $\ktheory{\alpha}{G}{*}{X} $  was  defined  in \cite{BEJU2013}.  

 Specializing  to  the  classifying  space  $\eub{G}$ of  proper  actions    of  $G$ and  performing  the  Borel  construction $\eub{G}\times_G EG$  gives  a  model  for  $BG$ and  thus  all  twistings  agree with elements  in  the cohomology  groups $H^3(BG,\mathbb{Z})$.    
  
In the  case of  a  discrete  group $G$ (compare \cite{mooreextensions}, \cite{moorepadical}),  a  class  $\alpha\in H^3(BG, \mathbb{Z}) =H^2(BG, S^1)$ determines  a central extension
$$1\to  S^1 \to \widetilde{G}_\alpha \overset{p_\alpha}{\to} G\to  1.$$

The  space  $\eub{G}$  with the  $\widetilde{G}_\alpha$-action  given  by  precomposition  with $p_\alpha$ is  a  model  for the  classifying  space  of  proper  actions  of  $\widetilde{G}_\alpha$, denoted  by  $\eub{{\widetilde{G}_\alpha}}$.
We  compare the  abelian  groups 
$\ktheory{}{\widetilde{G}_\alpha}{*}{\eub{\widetilde{G}_\alpha}}$
and $\ktheory{\alpha}{G}{*}{\eub{G}}$. 

We pay  specific  attention  to  the  groups $\sldrei$ and  $\stein$, related  by a  a central extension  of  the  form     

$$1\to  \mathbb{Z}/2 \to \stein \to \sldrei\to 1.$$

The  integral cohomology  of both  groups $\stein$, and  $\sldrei$ has  been  extensively  studied in \cite{Soule(1978)}, where  also a  model  for  the  classifying  space  for  proper  actions $\eub{\sldrei}$ was  constructed. In  third  degree, the  cohomology  groups  are finitely  generated, $2$-torsion,  and  generated  by  classes  $u_1, u_2$  in the  case  of $\sldrei$  and a  single  class  $w_1$ in the  case  of  $\stein$. 

We  describe the  restriction of  the  classes $u_1$ and $u_2$  to  the  cohomology  of  finite  subgroups  of  $\sldrei$ in Section \ref{sectiontwists}, where  also  the  relation  to  the  generating  class $w_1$  is  stated.  We  follow  these  classes  to  their  restrictions  on finite  subgroups   of $\stein$, which  are  covers $2$ to  $1$  of  finite  subgroups  of  $\sldrei$. 

It  turns  out  that  the  torsion  class  $u_1+u_2$ represents  the  central  extension 
$$1\to \IZ/2\IZ \to  \stein \overset{p}{\to}  \sldrei\to  1, $$

and  its  restriction  to  finite  groups  $H\leq  G$ gives  a  model  for  Schur  covering  groups of  $H$: 
$$1 \to \IZ/2\IZ \to  p^-1(H) \to  H, $$
(However,  more  finite  subgroups  appear  in  $\stein$ that  are not  a  Schur  covering  group  for  any finite  group  of $\sldrei$).

Thus,  a  cocycle  representing  $u_1+u_2$  and  the  central  extension satisfy  the  hypotheses  of  the following   Theorem (\ref{theoremuntwistingcentral} )

\begin{theoremn}
Let $G$ be a discrete group and  let $\alpha\in Z^2(G;S^1)$ be a cocycle taking values in $\IZ/n\IZ$. Consider the extension associated to $\alpha$
$$\xymatrix{1\ar[r]&\IZ/n\IZ\ar[r]&G_\alpha\ar[r]^{\rho}&G\ar[r]&1.}$$

Denote  by  $\eub{G}$ a  model  for  the  classifying  space  of  proper  actions and  notice  that  the  action  of  $G_\alpha$ via   $\rho $ on $\eub{G}$  exhibits  the  later  space as  a  model  for  $\eub{G_\alpha}$.

Then, the  map $\rho$  gives  an  isomorphism  of  abelian  groups  between  the   Bredon  cohomology  groups of  $\eub{G}$  with  coefficients  in  the  $\alpha $-twisted  representation  ring   and  the  Bredon  cohomology  groups  of  $\eub{G_\alpha}$  with  coefficients  in  the $1$-central  group representation Bredon module ( defined  in  \ref{defkcentralbredonmodule}). In  symbols, 

$$H^*(\underbar{E}G;\calr_\alpha^G)\xrightarrow{\rho^*}H^*(\underbar{E}G_\alpha;\calr_1^{G_\alpha}).$$

\end{theoremn}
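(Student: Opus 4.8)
The plan is to use the observation, already in the statement, that $\eub{G}$ with the $G_\alpha$-action through $\rho$ is a model for $\eub{G_\alpha}$. Because the central subgroup $\IZ/n\IZ$ lies in the kernel of this action, every isotropy group of the $G_\alpha$-action on $\eub{G}$ is the full preimage $\rho^{-1}(H)$ of an isotropy group $H$ of the $G$-action, and in particular contains $\IZ/n\IZ$. First I would show that $\rho$ induces a functor $\OrGF{G_\alpha}{\calfin}\to\OrGF{G}{\calfin}$, $G_\alpha/K\mapsto G/\rho(K)$, which restricts to an isomorphism from the full subcategory of orbits $G_\alpha/K$ with $\IZ/n\IZ\subseteq K$ onto $\OrGF{G}{\calfin}$, with inverse $G/H\mapsto G_\alpha/\rho^{-1}(H)$. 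The only point to verify is that $\rho$ induces a bijection $\map_{G_\alpha}(G_\alpha/\rho^{-1}(H),G_\alpha/\rho^{-1}(H'))\xrightarrow{\cong}\map_{G}(G/H,G/H')$, which holds because $\rho$ is surjective with central kernel: the subconjugacy condition $\tilde g^{-1}\rho^{-1}(H)\tilde g\subseteq\rho^{-1}(H')$ and the coset ambiguity modulo $\rho^{-1}(H')$ descend isomorphically to the corresponding data for $H,H'$ in $G$.

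The second step produces the comparison of coefficients objectwise. Both Bredon cohomology groups are computed from cochain complexes built on the cells of the common space $X=\eub{G}=\eub{G_\alpha}$; in degree $p$ a set of orbit representatives of $p$-cells with isotropy $H_\sigma$ contributes $\prod_\sigma \calr_\alpha^G(G/H_\sigma)$ on the $G$-side and $\prod_\sigma \calr_1^{G_\alpha}(G_\alpha/\rho^{-1}(H_\sigma))$ on the $G_\alpha$-side. For each finite subgroup $H\leq G$ the restricted cocycle $\alpha|_H$ classifies the extension $1\to\IZ/n\IZ\to\rho^{-1}(H)\to H\to 1$, and Schur's correspondence between projective representations with cocycle $\alpha|_H$ and linear representations of the central extension $\rho^{-1}(H)$ identifies $\calr_\alpha^G(G/H)=R_{\alpha|_H}(H)$ with the summand of $R(\rho^{-1}(H))$ on which the distinguished generator of the central $\IZ/n\IZ$ acts by the fixed root of unity $\zeta=e^{2\pi i/n}$. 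By the defining property of the $1$-central module (Definition \ref{defkcentralbredonmodule}) this summand is exactly $\calr_1^{G_\alpha}(G_\alpha/\rho^{-1}(H))$: an $\alpha|_H$-twisted representation $V$ satisfying $V(g)V(h)=\alpha(g,h)V(gh)$ lifts uniquely to a linear representation of $\rho^{-1}(H)$ with central character $\zeta$, and this assignment is additive and compatible with the $R(H)$-module structure, giving an isomorphism of abelian groups at each object.

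The third step, which I expect to be the main obstacle, is the naturality of these objectwise isomorphisms with respect to the morphisms of $\OrGF{G}{\calfin}$, that is, the restriction maps of subgroup inclusions and the conjugation maps. For an inclusion $H\subseteq H'$ I would check that restricting a representation of $\rho^{-1}(H')$ with central character $\zeta$ to $\rho^{-1}(H)$ matches, under the Schur lift, the restriction of the $\alpha$-twisted $H'$-representation to $H$; this forces one to fix a coherent family of cocycle representatives over the poset of finite subgroups, so that $\alpha|_H$ is literally the restriction of $\alpha|_{H'}$ and the two lifts coincide on the nose. For conjugation by $g\in G$ one uses that $\IZ/n\IZ$ is fixed by conjugation, so that the conjugation automorphism of $\rho^{-1}(H)$ does not depend on the choice of lift $\tilde g\in G_\alpha$ of $g$; the scalar phases produced by the cocycle on the $G$-side then cancel against the phases implicit in the lift, rather than accumulating, because both $\calr_\alpha^G$ and $\calr_1^{G_\alpha}$ are built from the same cocycle data. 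Controlling these phases is the delicate part of the argument.

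Once naturality is in place, the objectwise Schur isomorphisms assemble to an isomorphism of the two Bredon cochain complexes attached to the common $G$-CW model of $X$, compatible with the differentials; passing to cohomology then yields the asserted isomorphism $\rho^*\colon H^*(\eub{G};\calr_\alpha^G)\xrightarrow{\cong}H^*(\eub{G_\alpha};\calr_1^{G_\alpha})$.
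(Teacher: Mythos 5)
Your proposal is correct and follows essentially the same route as the paper: your objectwise Schur lift sending an $\alpha|_H$-twisted representation $\beta$ to the linear representation $(h,t)\mapsto t\cdot\beta(h)$ of $\rho^{-1}(H)$ with central character $e^{2\pi i/n}$, with inverse $\epsilon\mapsto\epsilon(\,\cdot\,,1)$, is exactly the natural transformation $T\colon {}^{\alpha}\calr^{?}\cong\calr_1^{?}\circ\rho$ of the paper's Lemma \ref{lemmauntwisting}, and your final assembly into an isomorphism of Bredon cochain complexes over the common $G$-CW structure of $\eub{G}=\eub{G_\alpha}$ is the paper's proof of Theorem \ref{theoremuntwistingcentral} verbatim. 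Your additional care (the full-subcategory comparison of orbit categories and the cancellation of conjugation phases, which the hypothesis that $\alpha$ is a single global $\IZ/n\IZ$-valued cocycle renders coherent) only makes explicit points the paper treats briefly.
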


We  use  the (Bredon) cohomological  description  to  feed  a spectral  sequence  constructed  to  compute  twisted  equivariant  $K$-theory  which  was  constructed  in \cite{BAVE2013}. The  input  of the  Spectral  sequence  are  Bredon  Cohomology groups  with  coefficients  in  twisted  representations,  as briefly  introduced  in Section \ref{sectionbredon}. The  spectral  sequence  is  seen  to  collapse  at  the  $E_2$-term  and  the twisted  equivariant $K$-theory  groups  are  determined. 
 \begin{itemize}
 
\item   (Theorem \ref{theoremtwistedktheoryu1})
The  twisted  equivariant  $K$-theory  groups with respect  to  $u_1$  are  as  follows: 
$${ }^{u_1}K^0_{\sldrei}(\underbar{E}\sldrei)\cong\IZ^{\oplus13} \quad   { }^{u_1}K^1_{\sldrei}(\underbar{E}\sldrei)= 0.$$

\item (Theorem \ref{theoremtwistedktheoryu2}) The  twisted  equivariant $K$-theory  groups  with  respect  to  $u_2$  are  as  follows: 
$${ }^{u_2}K^0_{\sldrei}(\underbar{E}\sldrei)\cong\IZ^{\oplus7},\quad { }^{u_2}K^1_{\sldrei}(\underbar{E}\sldrei)= 0.$$

\item (Theorem \ref{theoremtwistedktheory}) The  twisted  equivariant  $K$-theory  groups  with  respect  to $u_1+u_2$ are  as  follows :
$${ }^{u_1+u_2}K^0_{\sldrei}(\underbar{E}\sldrei)\cong\IZ^{\oplus5}, \quad { }^{u_1+u_2}K^1_{\sldrei}(\underbar{E}\sldrei)\cong\IZ/2\IZ.$$
\end{itemize}

Using  the  Universal  Coefficient  Theorem for  Bredon  cohomology  with coefficients  in twisted  representations, Theorem 1.13 in \cite{BAVE2013},  the previous  groups  are  verified  to  be  isomorphic  to  some  equivariant  $K$-Homology  groups with  coefficients  defined in terms  of  Kasparov $KK$-Theory  groups in Section \ref{sectionapplications}, Theorem  \ref{theoremkhomology}. This  extends  and  generalizes  work  by  S\'anchez-Garc\'ia  in \cite{Sanchez-Garcia(2006SL)} in the  untwisted  setting. 

 A  version  of  the  Baum-Connes  Conjecture  with coefficients, \cite{echterhoffchabert} relates  these  groups  to  the  topological  $K$-theory  of  twisted  group $C^*$-algebras. We  see  that  the input  of  the Baum Connes  map  with  coefficients given  by  the  twistings  $u_1$, $u_2$ and  $u_1+ u_2$  satisfy a version  of  KK-theoretic  Duality  studied  in \cite{echterhoffemersonkim}, and  verified  in \cite{BAVE2013} for  the  twist  $u_1$.

The  result  is  interpreted  in  terms  of  twisted  equivariant  $K$- theory  of  the  classifying  space $B \sldrei$  using  results by  Tezuka  and  Yagita \cite{tezukayagita}, the  Atiyah-Segal  Completion Theorem 4.4 on page  611  of  \cite{lueckolivercompletion}.

 This  work  is organized  as follows: 
 In  Section 2,  we  introduce  Bredon  (Co)-homology,  focusing  on  coefficients  in twisted  representations. In  Section  3,  we  review  spectral  sequences  relating  Bredon  cohomology  groups  to  versions  of  twisted  equivariant  $K$-theory.  Section  4  deals  with  the  proof  of  Theorem \ref{theoremuntwistingcentral},  relating twisted  equivariant  $K$-theory  and  untwisted   $K$-theory  which  is  equivariant  with  respect  to  a  central  extension coding  the  twist.  
Section  5  describes  cohomological  information  determining  the  twists,   as  well  as  some  misunderstandings  in  the  literature  concerning  the universal  central  extension  of  $\sldrei$  and $\stein$,  see  \ref{remarkcentralstein}. Section  6  deals  with  the  computations  in  Bredon  cohomology. Finally,  Section  7  gives   interpretations  of  the  results   as  computations  of  twisted  equivariant $K$-homology  related to  versions with  coefficients  of  the  Baum-Connes  Conjecture,  as  well  as  computations of  the complex  $K$  theory  of  the  classifying  space  $B \stein$ by  Tezuka  and  Yagita.

\subsection*{Acknowledgements}  
The  first  author  thanks  the  support  of  a CONACYT Postdoctoral  fellowship. The  second author  thanks  the  support  of  a  UNAM Postdoctoral Fellowship.

The  first  author  thanks  Prof. Pierre  de la  Harpe  for  enlightening correspondence  related  to the  difference between  $\stein$  and    the  universal  central  extension  of  $\sldrei$.    

Both  authors  thank  an  anonymous   referee  for  making  crucial  suggestions   about   both  the  presentation  and  the  mathematical  content  of this  note,  particularly  the  suggestion  of the  material  in section 4,  which  helped  the  authors  to  identify  a  mistake  in  a  previous  version of  this  work.

\section{Bredon  (co)-homology }\label{sectionbredon}
We  recall  briefly some definitions  relevant  to  Bredon  homology and  cohomology, see \cite{valettemislin}  for  more  details. 
Let $G$ be a discrete group. A $G$-CW-complex is a CW-complex with a $G$-action permuting the cells and such that
if a cell is sent to itself, this is done by the identity map. We call the $G$-action proper if all cell stabilizers are finite
subgroups of $G$.
\begin{definition} A model for $\eub G$ is a proper $G$-CW-complex $X$ such that for any proper $G$-CW-complex $Y$ there is a
unique $G$-map $Y\rightarrow X$, up to $G$-homotopy equivalence.
\end{definition}
One can prove that a proper $G$-CW-complex $X$ is a model of $\eub G$ if and only if the subcomplex of fixed points $X^H$ is
contractible for each finite subgroup $H\subseteq G$.  It can be shown that classifying spaces for proper actions always exist.

Let $\Orfin$
be the orbit category of finite subgroups of $G$; a category with one object $G/H$ for each finite subgroup $H\subseteq G$ and where  morphisms are  given  by  $G$-equivariant maps.  There  exists  a  morphism $\phi:G/H\rightarrow G/K$ if and only if $H$ is conjugate in $G$  to a subgroup of $K$.

\begin{definition}[Bredon  chain complex]
Let  $X$  be  a  proper $G$-CW-complex.  The  contravariant   functor  $\underline{C}_{*}(X):\Orfin\to  \mathbb{Z}-\CHAINCOMPLEXES  $  assigns  to every  object  $G/H$     the  cellular $\mathbb{Z}$-chain  complex   of  the $H$-fixed point  complex    $ \underline{C}_{*}(X^ {H})\cong C_{*}({\rm  Map  }_{G}(G/H, X))$  with  respect  to  the  cellular  boundary  maps $\underline{\partial}_{*} $. 
\end{definition}

We  will  use  homological  algebra  to  define Bredon  cohomology. 

A  contravariant  coefficient  system  is  a  contravariant  functor $ M:\Orfin\to \mathbb{Z}-\MODULES$.    Given  a  contravariant  coefficient  system $M$, the  Bredon    cochain   module $C_G^n(X;M)$ is  defined  as the   abelian  group   of  natural  transformations   of  functors  defined  on  the  orbit  category $\underline{C}_{n}(X) \to  M$. In  symbols, 

$$C_G^n(X;M)={\rm Mor}_{{\rm Funct}(\Orfin,\mathbb{Z}-\MODULES)}(\underline{C}_n(X),M)$$

Given a  set  $\{e_{\lambda}\}$ of   orbit   representatives of  the n-cells of  the  $G$-CW  complex  $X$,  and isotropy  groups  $H_{\lambda}$ in $G$ of  the  cells  $e_{\lambda}$,   the  abelian  groups $C_G^n(X,M)$  satisfy:
 
 $$C_G^n(X,M)= \underset{\lambda}{\bigoplus }Hom_{\mathbb{Z}}(\mathbb{Z}[e_{\lambda}], M(G/H_{\lambda}))$$   
 with  one  summand  for  each  orbit representative  $e_\lambda$.
They  afford  a differential $\delta^n:C_G^n(X,M)\to C_G^{n+1}(X,M)$ determined  by  $\underline{\partial}_*$ and pullback maps $M(\phi):  M(G/H_\mu)\to M( G/ H_\lambda )$  for  morphisms  $\phi:G/H_\lambda \to  G/H_\mu$.

\begin{definition}[Bredon cohomology] 
Let  $M$   be  a  contravariant coefficient  system. The  Bredon  cohomology  groups   with  coefficients  in  $M$, denoted  by  $H^{*}_{G} (X,  M)$    are  the  cohomology   groups  of  the  cochain  complex  $\big (C_{G}^ *(X, M), \delta^* \big )$. 
\end{definition}
A covariant coefficient system is a covariant functor $N:\Orfin\rightarrow \IZ-\MODULES$. Let $N$ be a covariant coefficient system and $X$ be a proper $G$-CW-complex.  Dually  to  the  cohomological  situation, one can define the Bredon homology groups with coefficients in $N$. We denote these by $H_*^G(X,N)$.  Details can be found in pages  14-15  of \cite{valettemislin}.

\subsection*{Bredon  (co)-homology  with  coefficients  in twisted  representations. }
\begin {definition}
Let  $K$ be  a  finite  subgroup  in  the  discrete  group $G$.   Let $V$  be  a  complex  vector  space and $S^1$ be the unit circle in the complex numbers. Given  a   cocycle     $\alpha:K\times K\to S^{1}$  representing  a  class  in  $ H^{2}(BK,S^{1})\cong H^{3}(BK,\mathbb{Z})$, an $\alpha$-twisted  representation  is  a   function  to  the  general  linear group of  $V$, $P:K\to Gl(V)$  satisfying: 
$$P(e)={\rm 1}$$  
$$P(x)P(y)=\alpha(x,y)P(xy).$$
The Grothendieck group of isomorphism classes of $\alpha$-twisted representations is called the \textit{$\alpha$-twisted representation group} and it is denoted by $R_\alpha(K)$
\end{definition}
Two  $\alpha, \alpha{'}$-twisted  representations  are  isomorphic  if   the  cocycles $\alpha$, $\alpha{'}$  are  cohomologous  in  $H^{2}(BK,S^{1})$.  
\begin{definition}
Let  $H$  be  a  finite  group  and  let  $\alpha  \in  Z^2(H,  S^1)$  be  a  cocycle. Recall  that  the  $\alpha$-twisted  Complex  group  algebra $ \IC ^\alpha  H$  is  generated  as a  complex  vector  space   by  the  elements $\{h\mid  h\in H\}$. The  multiplication  is  given  by   the  following  formula on  representatives: 
$$h_1 h_2 = \alpha (h_1, h_2) h_1 h_2, $$

and  extended  $\IC$-linearly  to  define  a  complex  algebra structure  on $\IC^\alpha H$.  
\end{definition}

It  is a  consequence  of  Theorem 3.2 in  page  112 ,  Volume  2, part  1  of \cite{karpilovsky},  that  the $K_0$ group  of  the $\alpha$- twisted  complex group  algebra $\IC^\alpha H$  agrees  with  the $\alpha$-twisted representation group $R_\alpha(H)$.

 We  define a contravariant and a  covariant  coefficient  system  for  the  family  $\mathcal{F}_G= \calfin$  of  finite  subgroups    
 agreeing  on  objects  by using the $K_0$-group  of  the  twisted  group  algebra,  using  restriction  to  define  the  contravariant  functoriality,  and using induction  to  define the covariant  functoriality.

 \begin{definition}\label{definitiontwistedcoefficients}
 Let $G$  be  a  discrete  group and  let   $\alpha\in Z^ {2}(G,S^ {1})$ be  a  cocycle.  Let  $i:  H\to G$  be  an  inclusion  of  a  finite  subgroup $H$. 
 
 Define $\calr_\alpha$ on objects $G/H$ by
$$\calr_\alpha (G/H):= K_0(\IC^{i^{*}\alpha}(H))\cong R_{i^{*}(\alpha)}(H).$$

Let  $ \phi:G/H\rightarrow G/K$  be a $G$-equivariant map, we denote by $\calr{_{\alpha}}_?(\phi):R_{\alpha\mid}(H)\rightarrow R_{\alpha\mid}(K)$ the induction of  $\alpha$-twisted, representations for  the covariant  functor. For the contravariant functor, we denote by  $  \calr{_{\alpha}}^? (\phi):R_{\alpha\mid}(K)\rightarrow R_{\alpha\mid}(H)  $ the restriction  of $\alpha$-twisted representations. 
 \end{definition}

\begin{definition}\label{defbredoncohom}
Let $G$  be  a  discrete  group, let  $X$  be  a  proper $G$-CW complex, and  let  $\alpha\in Z^{2}(G, S^ {1})$  be  a  cocycle. The  $\alpha$-twisted Bredon  (co)-homology groups  of  $X$  are  the  Bredon  (co)-homology  groups  with  respect  to  the   functors  described  in Definition \ref{definitiontwistedcoefficients}. 
\end{definition}

\begin{remark}
Notice  the   role  of  the  family  of   finite  groups   in  definition \ref{defbredoncohom}. More  generally,  one  can  define  Bredon  (co)-homology  groups   for  a family  $\mathcal{F}$  of  subgroups which  contains  the  isotropy  groups  of  a  $G$-CW  complex  $X$,  and  a  functor $\mathcal{F}\to \IZ-\MODULES$.  Since  we  are  dealing  with  proper  actions on $G$-CW  complexes, we  can concentrate  on Bredon cohomology  for  the  family  of   finite  subgroups.  
\end{remark}

\section{ Spectral  sequences  for  Twisted  Equivariant  $K$-Theory. }\label{sectionspectral}

Twisted  equivariant  $K$-theory  for  proper  and  discrete  actions   has  been  defined  in a  variety  of  ways. For  a  torsion  cocycle  $\alpha \in Z^{2}(G,  S^ {1})$, it is possible to define it  in  terms  of  finite  dimensional, so  called  $\alpha$-twisted  vector  bundles, as  for example in  \cite{dwyer2008}.  This  is  not  possible  for  twistings  of  infinite order, and  the  general  approach of  \cite{BEJU2013} or  $C^*$-algebraic  methods  are needed. 

\begin{definition}\label{definitionktheory}
Let  $\alpha\in Z^ {2}(G,S^1)$  be a  normalized  torsion cocycle  of order  $n$   for  the  discrete  group  $G$,  with associated  central  extension  
$$0\to  \mathbb{Z}/{n} \to  G_{\alpha}\to  G.$$  An  \textit{$\alpha$-twisted  vector  bundle}  is  a  finite  dimensional $G_{\alpha}$-equivariant  complex vector bundle  such  that  $\mathbb{Z}/n$  acts  by  multiplication  with a  primitive $n$-th root  of  unity.  The  \textit{$\alpha$-twisted, $G$-equivariant  K-theory}  groups $^{ \alpha }{K}^{0}_{G}( X)$ are  defined  as   the  Grothendieck  groups  of  the  isomorphism classes of  $\alpha$-twisted  vector  bundles over  $X$.
  
\end{definition}
 Given  a proper  $G$-CW  complex $X$,  define $^{\alpha}K^{-n}_{G}(X)$  as  the  kernel  of  the  induced  map 

$$ ^{ \alpha }{K}^{0}_{G}( X\times S^{n})   \overset{{\rm incl}^{*}}{\to}       {  ^{ \alpha }{K}^{0}_{G}( X)}. $$
 
The  $\alpha$-twisted equivariant  $K$-theory   catches   information  relevant  to  the class  of  twistings coming from  the  torsion  part  of  the  group  cohomology  of  the  group, in the  sense  that  the $K$-groups are  zero  for  cocycles  representing non-torsion  classes. In contrast,  the  approach  discussed in \cite{BEJU2013} overcomes  this  difficulty.

As  noted in \cite{barcenasespinozauribevelasquez}, there is a spectral sequence connecting the $\alpha$-twisted Bredon cohomology and the $\alpha$-twisted
equivariant K-theory of finite proper $G$-CW complexes. When the twisting is given  by a  torsion  element  of  $H^3( BG, \IZ)$,  this spectral sequence is a special case of the Atiyah-Hirzebruch spectral sequence  for \emph{untwisted} $G$-cohomology theories constructed by Davis and L\"uck \cite{davisluck}. In particular, it  collapses  rationally.
\begin{theorem}[\cite{barcenasespinozauribevelasquez}]\label{spectral} Let $X$ be a finite proper $G$-CW complex for a discrete group $G$, and let $\alpha\in Z^2(G,S^1)$ be a
normalized torsion cocycle. Then there is a spectral sequence with

$$E_2^{p,q}=\begin{cases}
  H^p_G(X,\calr_{\alpha}^?)&\text{ if $q$ is even}\\
                     0 &\text{ if $q$ is odd}
\end{cases}$$
so that $E_\infty^{p,q}\Rightarrow { }^\alpha K^{p+q}_G(X)$.
\end{theorem}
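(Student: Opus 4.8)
The plan is to realize the torsion-twisted theory as a summand of a genuine \emph{untwisted} $G_\alpha$-equivariant cohomology theory, and then to invoke the Atiyah--Hirzebruch spectral sequence of Davis and L\"uck \cite{davisluck}. First I would use Definition \ref{definitionktheory}: if $\alpha$ has order $n$ with associated central extension $0\to \IZ/n\to G_\alpha\xrightarrow{\rho} G$, then an $\alpha$-twisted vector bundle over $X$ is a $G_\alpha$-equivariant bundle on which the central $\IZ/n$ acts by a fixed primitive $n$-th root of unity. Consequently ${}^\alpha K^*_G(X)$ is exactly the direct summand of the untwisted equivariant $K$-theory $K^*_{G_\alpha}(X)$ (with $X$ regarded as a finite proper $G_\alpha$-CW complex through $\rho$) cut out by that central character; the splitting is natural and exact. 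Since $K^*_{G_\alpha}(-)$ is a proper $G_\alpha$-equivariant cohomology theory, represented in the Davis--L\"uck sense by an $\Or(G_\alpha)$-spectrum, and the character summand is a natural direct summand, the theory ${}^\alpha K^*_G(-)$ inherits the structure of a $\IZ/2$-graded $G$-equivariant cohomology theory on finite proper $G$-CW complexes, with Bott periodicity accounting for the dependence on $q$ only through its parity.

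Next I would feed the skeletal filtration of $X$ into this cohomology theory, which is precisely the Davis--L\"uck setup. Their general construction yields a spectral sequence whose $E_1$-page is built from the relative groups ${}^\alpha K^*_G(X^{(p)}/X^{(p-1)})$; by the equivariant cellular structure these decompose, over a set of orbit representatives $e_\lambda$ of the $p$-cells with isotropy $H_\lambda$, as products of reduced twisted $K$-groups of the orbits $G/H_\lambda$. Taking the cohomology of the resulting cochain complex identifies the $E_2$-page with the Bredon cohomology of $X$ with coefficients in the coefficient system $G/H\mapsto {}^\alpha K^q_G(G/H)$, the contravariant functoriality being induced by the restriction maps along the $G$-maps of orbits. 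Convergence $E_\infty^{p,q}\Rightarrow {}^\alpha K^{p+q}_G(X)$ then comes for free from the finiteness of $X$, since the filtration is bounded.

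The crux is then to identify this coefficient system, i.e. to compute the twisted equivariant $K$-theory of the orbits. By induction one has ${}^\alpha K^q_G(G/H)\cong {}^{i^*\alpha}K^q_H(\mathrm{pt})$ for the inclusion $i\colon H\hookrightarrow G$. For a finite group $H$ the twisted group algebra $\IC^{i^*\alpha}H$ is finite dimensional and semisimple, hence a finite product of matrix algebras over $\IC$; the twisted $K$-theory of a point is therefore concentrated in even degrees, where by the result of Karpilovsky recalled above \cite{karpilovsky} it equals $K_0(\IC^{i^*\alpha}H)\cong R_{i^*\alpha}(H)=\calr_\alpha(G/H)$, and it vanishes in odd degrees. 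Combined with the $2$-periodicity this gives exactly the stated $E_2$-page, with even rows equal to $H^p_G(X,\calr_\alpha^?)$ and odd rows zero. The main obstacle in carrying this out rigorously is the first step: verifying that the torsion-twisted theory is genuinely an untwisted $G$-equivariant cohomology theory in the Davis--L\"uck sense, with the correct $\Or(G)$-functoriality, so that restriction of $\alpha$-twisted representations supplies precisely the contravariant coefficient structure $\calr_\alpha^?$ of Definition \ref{definitiontwistedcoefficients}; only then does the orbitwise identification with the twisted representation ring assemble into the claimed spectral sequence.
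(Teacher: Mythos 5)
Your proposal is correct and takes essentially the route the paper itself indicates: Theorem \ref{spectral} is stated by citation to \cite{barcenasespinozauribevelasquez} without proof, but the paper's preceding remark says exactly that for torsion twists this spectral sequence is a special case of the Davis--L\"uck Atiyah--Hirzebruch spectral sequence \cite{davisluck} applied to the untwisted theory, which is what you build via the central-character summand of $K^*_{G_\alpha}$. The two points you flag as the crux --- the $\Or(G)$-functoriality of the summand and the orbitwise identification ${}^{\alpha}K^0_G(G/H)\cong K_0(\IC^{i^*\alpha}H)\cong R_{i^*\alpha}(H)$ with odd groups vanishing by semisimplicity \cite{karpilovsky} --- are precisely the ingredients the paper supplies elsewhere (Lemma \ref{lemmauntwisting} and the discussion following Definition \ref{definitionktheory}), so your sketch matches the intended argument.
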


\section{$S^1$-central extensions and torsion cocycles.}

\begin{definition}\label{rep defzklineal}
Let  $ 1\to  \IZ/n \IZ \to  \tilde{H} \to  H$  be  a  central  extension. Let  $k$ ba a natural number with $0\leq k\leq n$. Let  $V$  be  a a  complex  vector  space.  A  $k$-central  representation of  $\tilde{H}$  is  a  map $\tilde{H} \to Gl(V)$,  where  the  generator $t \in   \IZ /n \IZ$ acts  by  multiplication  by $e^{2\pi i k/n}$.

\begin{definition}\label{defkcentralbredonmodule}
The $k$-central  representation  group of  $\tilde{H}$, denoted  by  $R_k(\tilde{H})$, is the  Grothendieck  group  of  isomorphism  classes  of  $k$-central  representations  of  $\tilde{H}$. 
\end{definition}
The $k$-central representation group is a contravariant coefficient system. Given a discrete group $\tilde{G}$, we denote by $\calr_k^?$  the functor
\begin{align*}
\calr_k^?:\Or_\calfin(\tilde{G})&\rightarrow \IZ-\MODULES\\
\tilde{G}/\tilde{H}&\mapsto R_k(\tilde{H}). 
\end{align*}
\end{definition}

\begin{lemma}\label{lemmauntwisting}
Let $G$  be  a  discrete  group  and  let  $\alpha\in Z^2(G;S^1)$ be a torsion cocycle of order $n$.  Then, 

\begin{itemize}
\item There  exists  a  cocycle $\gamma$  with  values  on $\IZ/n\subset S^1$, which  is  cohomologous  to  $\alpha$. 

\item There  exists  a  central  extension  of  the  form
\begin{equation*}\xymatrix{1\ar[r]&\IZ/n\IZ\ar[r]&G_\alpha\ar[r]^{\rho}&G\ar[r]&1.}
\end{equation*}
With  the  property  that  for  each finite  group $H\leq G$, 
the  $1$-central representation  group $\calr_1(\rho^{-1}(H))$  is  isomorphic  to  the  $\alpha\mid_H$- twisted  representation  ring ${}^{\alpha}\calr(H)$ as an   abelian  group.

\item Moreover,  this  extends  to  a natural  transformation  of contravariant  functors
defined  over  the  orbit  category of  $G$,  

$$T: {}^{\alpha}\calr^?    \cong \calr^?_1\circ \rho   $$
which  consists  of  group  isomorphisms  on  each orbit. 
\end{itemize}

\end{lemma}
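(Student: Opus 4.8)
The plan is to establish the three bullets in turn, deriving everything from a single $\IZ/n$-valued representative of the class of $\alpha$.

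For the first bullet I would use the short exact sequence of trivial $G$-modules $1\to \IZ/n\to S^1\xrightarrow{z\mapsto z^n} S^1\to 1$, in which the $n$-th power map is surjective with kernel the group of $n$-th roots of unity $\cong\IZ/n$. Its long exact sequence in group cohomology contains the segment $H^2(G;\IZ/n)\xrightarrow{\iota_*} H^2(G;S^1)\xrightarrow{(n)_*} H^2(G;S^1)$, where $(n)_*$ is raising to the $n$-th power, i.e.\ multiplication by $n$ on $H^2(G;S^1)$. Since $[\alpha]$ has order dividing $n$ it lies in $\ker(n)_*=\im\iota_*$, so $[\alpha]=\iota_*[\gamma]$ for a class represented by a cocycle $\gamma$ valued in $\IZ/n\subset S^1$; after normalizing, $\gamma$ is the desired representative cohomologous to $\alpha$.

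For the second bullet I would build the central extension explicitly from $\gamma$: take $G_\alpha=\IZ/n\times G$ as a set, with product $(a,g)(b,h)=(a+b+\gamma(g,h),\,gh)$ and projection $\rho(a,g)=g$, so that $\rho^{-1}(H)$ is precisely the restricted extension determined by $\gamma\mid_H$. Writing $\zeta=e^{2\pi i/n}$ and identifying the central $\IZ/n$ with $\langle\zeta\rangle$, I would exhibit the mutually inverse assignments between $1$-central representations $P$ of $\rho^{-1}(H)$ and $\gamma\mid_H$-twisted representations $Q$ of $H$ given by $Q(h)=P(0,h)$ and $P(a,h)=\zeta^a\,Q(h)$. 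A direct check shows that the homomorphism relation for $P$ is equivalent to the $\gamma\mid_H$-twisted multiplicativity of $Q$, and that $t=(1,e)$ acts on $P$ by $\zeta$, so $P$ is $1$-central. These assignments respect direct sums and carry isomorphisms to isomorphisms, hence descend to an isomorphism of Grothendieck groups $R_{\gamma\mid_H}(H)\cong R_1(\rho^{-1}(H))$. Composing with the isomorphism $R_{\alpha\mid_H}(H)\cong R_{\gamma\mid_H}(H)$ induced by a fixed global $1$-cochain $b$ on $G$ with $\alpha=\gamma\,\delta b$ (restricted to $H$) yields the claimed isomorphism ${}^\alpha\calr(H)\cong R_1(\rho^{-1}(H))$ of abelian groups.

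For the third bullet the contravariant functoriality is restriction on both sides, and I would verify that the resulting squares commute on the two types of generating morphisms of $\Orfin$. For an inclusion $H\leq K$, restriction of $P$ along $\rho^{-1}(H)\hookrightarrow\rho^{-1}(K)$ corresponds under $P(a,h)=\zeta^a Q(h)$ visibly to restriction of $Q$. For a conjugation morphism $c_g$ the key observation is that $\rho^{-1}(gHg^{-1})=\tilde g\,\rho^{-1}(H)\,\tilde g^{-1}$ for any lift $\tilde g$ of $g$, so conjugation downstairs lifts to conjugation upstairs, and the scalar $\zeta^a$ is untouched because the central $\IZ/n$ is fixed by every inner automorphism. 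The point where I expect the genuine work is checking compatibility of the auxiliary cochain $b$: it is only globally defined on $G$ and is not fixed on the nose under conjugation, so I would have to verify that the resulting discrepancy is a coboundary term, which acts trivially on isomorphism classes and therefore leaves the naturality square commutative at the level of Grothendieck groups. Granting this, the maps $T_H$ assemble into the asserted natural isomorphism $T:{}^\alpha\calr^?\cong\calr_1^?\circ\rho$.
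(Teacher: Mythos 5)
Your proposal is correct and takes essentially the same route as the paper: you replace $\alpha$ by a cohomologous $\IZ/n$-valued cocycle $\gamma$ (the paper extracts $\gamma$ by taking $n$-th roots of the cochain trivializing $\alpha^n$, you by the long exact sequence of $1\to\IZ/n\to S^1\xrightarrow{z\mapsto z^n} S^1\to 1$ --- equivalent arguments), build $G_\alpha$ as the set $G\times\IZ/n$ with $\gamma$-twisted multiplication, and pass between twisted and $1$-central representations via $P(a,h)=\zeta^a Q(h)$, which is exactly the paper's assignment $T(\beta)$ with inverse $\kappa(h)=\epsilon(h,1)$. Your treatment of conjugation morphisms is in fact more careful than the paper's (which checks only inclusions, morphisms in $\Orfin$ being determined by inclusions up to conjugacy, and absorbs cohomologous cocycles into the rectification of \cite{BAVE2013}); one small caution: the phrase ``a coboundary term acts trivially on isomorphism classes'' is imprecise, since twisting by a character is a cocycle-preserving change that can act nontrivially on $R_\alpha(H)$ --- the correct observation is that conjugating by a lift $\tilde g$ inside $G_\alpha$ automatically produces the canonical correcting scalar $\lambda_g(h)=\gamma(g,g^{-1}hg)\gamma(h,g)^{-1}$ used to define restriction along conjugation downstairs, so with these consistent choices the naturality square commutes on the nose.
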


\begin{proof}

\begin{itemize}
\item Let $\alpha\in Z^2(G;S^1)$ be a torsion cocycle of order $n$. Then  $\alpha^n$ is cohomologous to the trivial cocycle, i.e there is a cochain $t\in C^1(G,S^1)$ with $\alpha^n=\delta t$. 

Define a cochain $u\in C^1(G, S^1)$ by $u(g)=(t(g))^{-\frac{1}{n}}$. The cocycle $\gamma=\alpha \cdot \delta u$ is again torsion of order $n$. The  cocycle $\gamma$ takes values in $\IZ/n\IZ$ and it is cohomologous to $\alpha$. 

\item  We use  the $\IZ/n\IZ$-valued cocycle  $\gamma$  to  define  a  group structure  on  the  set $G\times \IZ/n\IZ$,  and  obtain   a central extension of $G$ by $\IZ/n\IZ$,  denoted  by $G_\alpha$ (the  notation  being  justified by  the  fact  that  $\alpha$  is  cohomologous to  $\gamma$).

Let $\sigma$  be  the generator of $\IZ/n\IZ$ and $0\leq i\leq n-1$.
The multiplication on the  group $G_\alpha$ is given on elements $(g,\sigma^i)$  by 
$$(g,\sigma^j)\cdot(h,\sigma^i)=(gh,\alpha(g,h)\sigma^{j+i}),$$
thus  defining  a  central  extension 
\begin{equation*}\label{extensiontwisting}\xymatrix{1\ar[r]&\IZ/n\IZ\ar[r]&G_\alpha\ar[r]^{\rho}&G\ar[r]&1.}
\end{equation*}

\item  Let  $H$ be  a  finite  subgroup  of  $G$. Given  a  torsion cocycle $\alpha$,  consider  the   central  extension 
$$1\to \IZ/n\IZ\to  G_\alpha \overset{\rho}{\to} G\to  1,$$
which  was  constructed  in  the  previous  part. 

Let $\beta: H\to Gl(V)$  be an  $\alpha$-representation.  We  define  the 1-central  representation  $T(H)$  as  the   isomorphism  class  determined  by  the  map $\rho^{-1}(H)\to Gl(V)$  defined   on  elements  $(h, t) \in H\times \IZ/ n\IZ$ by $t\cdot (\beta(t,h))$,  where  we  consider $t\in  \IZ/n\IZ \subset S^ 1$ and $\cdot$  denotes  complex  multiplication. 

This  defines  a  group  homomorphism   
$$T: {}^{\alpha}\calr (G/H)    \to \calr_1( G_\alpha / \rho^{-1}(H) ). $$
An  inverse  to  the   homomorphism  $T$  is  given  by  assigning to the  $1$-central  representation  $\epsilon: \tilde{H}\to GL(V)$  the  projective  representation $\kappa : H\to GL(V)$  given  by  $\kappa(h)= \epsilon(h, 1)$. One  checks  that  this  is  a  $\gamma$-representation, where  $\gamma$  is  the cocycle with values  on $\IZ/n\IZ$ constructed  in the  first  part.

\item Let  $H$  and  $K$  be  finite  subgroups  of $G$. 
 The  map $\rho: G_\alpha \to  G$
defines  a  functor 
\begin{align*}
\Or_\calfin(G)&\xrightarrow{\rho^*}\Or_\calfin(G_\alpha)\\
G/H&\mapsto G_\alpha/\rho^{-1}(H)
\end{align*}
between  the  orbit  categories  with  respect  to  the family  of  finite  subgroups. 

We  will  analyze  the  behaviour of  the  functor  $T$  with  respect  to  restriction. 

Let  $\phi: G/H\to  G/K$  be  a  $G$-equivariant  map.  Recall  that  such  a  map  is  determined  up  to  $G$-conjugacy  by  an inclusion $H\to K$  of finite  subgroups of $G$.

Given an  $\alpha$-projective representation $ \beta:H\to GL(V)$, the  following  diagram  is  commutative  
$$\xymatrix{  K \ar[r] & H \ar[r]^{\beta} & GL(V) \\ \rho^{-1}(K) \ar[r] \ar[u]^{\rho}&  \rho^{-1}(H) \ar[u]^{\rho} \ar[ru]_{T(\beta)}&  },   $$
where  the  unlabelled  arrow  denote inclusions. 

Hence,  the  functor  $T$  is  compatible  with  restrictions  and  thus  defines a  natural  transformation  of  contravariant  functors over  the  orbit  category.

\end{itemize}

\end{proof}



\begin{theorem}\label{theoremuntwistingcentral}
Let $G$ be a discrete group and  let $\alpha\in Z^2(G;S^1)$ be a cocycle taking values in $\IZ/n\IZ$. Consider the extension associated to $\alpha$
$$\xymatrix{1\ar[r]&\IZ/n\IZ\ar[r]&G_\alpha\ar[r]^{\rho}&G\ar[r]&1.}$$

Denote  by  $\eub{G}$ a  model  for  the  classifying  space  of  proper  actions and  notice  that  the  action  of  $G_\alpha$ via   $\rho $ on $\eub{G}$  exhibits  the  later  space as  a  model  for  $\eub{G_\alpha}$.

Then, the  map $\rho$  gives  an  isomorphism  of  abelian  groups  between  the   Bredon  cohomology  groups of  $\eub{G}$  with  coefficients  in  the  $\alpha $-twisted  representation  ring   and  the  Bredon  cohomology  groups  of  $\eub{G_\alpha}$  with  coefficients  in  the $1$-central  group representation group. In  symbols, 

$$H^*(\underbar{E}G;\calr_\alpha^G)\xrightarrow{\rho^*}H^*(\underbar{E}G_\alpha;\calr_1^{G_\alpha}).$$

\end{theorem}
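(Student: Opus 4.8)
The plan is to reduce the statement to the natural isomorphism of coefficient systems furnished by Lemma \ref{lemmauntwisting}, after checking that the two Bredon cochain complexes rest on the \emph{same} underlying cellular data. First I would record how the $G$-CW structure on $\eub{G}$ passes to a $G_\alpha$-CW structure under the action through $\rho$. Since $\IZ/n\IZ = \ker\rho$ acts trivially, the $G_\alpha$-stabilizer of a cell $e$ is exactly $\rho^{-1}(\mathrm{Stab}_G(e))$; because $|\rho^{-1}(H)| = n\,|H|$, a cell has finite $G_\alpha$-stabilizer precisely when it has finite $G$-stabilizer, and the $G$-orbit $G\cdot e \cong G/H$ becomes the $G_\alpha$-orbit $G_\alpha\cdot e \cong G_\alpha/\rho^{-1}(H)$. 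Hence a single set of orbit representatives $\{e_\lambda\}$ of the $n$-cells serves both structures at once, with stabilizers $H_\lambda$ and $\rho^{-1}(H_\lambda)$ respectively, while the cellular boundary data $\underline{\partial}_*$ is literally the same. In orbit-category language this is the functor $\rho^* : \Or_\calfin(G) \to \Or_\calfin(G_\alpha)$, $G/H \mapsto G_\alpha/\rho^{-1}(H)$ of Lemma \ref{lemmauntwisting}, and it identifies $\underline{C}_*(\eub{G_\alpha})\circ \rho^*$ with $\underline{C}_*(\eub{G})$; in particular $\eub{G}$ with the $\rho$-action is indeed a model for $\eub{G_\alpha}$, so both cohomologies are computed on the same complex.

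Next I would compare the cochain modules degree by degree. By the explicit description of Bredon cochains,
$$C^n_G(\eub{G};\calr_\alpha^?) = \bigoplus_\lambda R_{\alpha|_{H_\lambda}}(H_\lambda), \qquad C^n_{G_\alpha}(\eub{G_\alpha};\calr_1^?) = \bigoplus_\lambda R_1\big(\rho^{-1}(H_\lambda)\big),$$
with one summand for each orbit representative $e_\lambda$ on each side, the indexing sets coinciding by the first step. The natural isomorphism $T : {}^\alpha\calr^? \xrightarrow{\cong} \calr_1^? \circ \rho^*$ of Lemma \ref{lemmauntwisting}, a group isomorphism $R_{\alpha|_{H_\lambda}}(H_\lambda)\xrightarrow{\cong}R_1(\rho^{-1}(H_\lambda))$ on each object, then assembles into a degreewise isomorphism $T_* : C^*_G(\eub{G};\calr_\alpha^?) \xrightarrow{\cong} C^*_{G_\alpha}(\eub{G_\alpha};\calr_1^?)$.

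The third step is to verify that $T_*$ is a morphism of cochain complexes. The differential $\delta^n$ is built from the cellular boundary $\underline{\partial}_*$ --- identical on both sides --- together with the coefficient restriction maps attached to the inclusions of finite stabilizers $\rho^{-1}(H_\lambda)\hookrightarrow\rho^{-1}(H_\mu)$ covering the face inclusions $H_\lambda\hookrightarrow H_\mu$. Compatibility of $T$ with these restrictions is exactly the content of the final bullet of Lemma \ref{lemmauntwisting} (the commuting square relating $\beta$, $T(\beta)$ and the inclusions), so $T_*$ commutes with $\delta^n$. Taking cohomology then produces the asserted isomorphism, which by construction is induced by $\rho$ and so is the map $\rho^*$ of the statement.

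A word on where the difficulty lies: the representation-theoretic heart --- that $\alpha$-twisting on $H$ matches the $1$-central condition on $\rho^{-1}(H)$, naturally in $H$ --- has already been settled in Lemma \ref{lemmauntwisting}. The theorem is therefore essentially formal once the cell bookkeeping is arranged, and I expect the only genuinely delicate point to be the first step: confirming that the $\rho$-action realizes $\eub{G}$ as a model for $\eub{G_\alpha}$ and that $\rho^*$ transports the boundary operator without alteration, so that the two Bredon complexes are literally the same cochain complex with coefficients related by $T$.
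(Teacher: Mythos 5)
Your proposal is correct and follows essentially the same route as the paper's proof: fix the $G$-CW structure on $\eub{G}$, observe that cells $G/H_\lambda\times D^n$ correspond to cells $G_\alpha/\rho^{-1}(H_\lambda)\times D^n$ with identical boundary data, and use the natural isomorphism $T$ of Lemma \ref{lemmauntwisting} to obtain a degreewise isomorphism of Bredon cochain complexes commuting with the differential, hence an isomorphism in cohomology. Your write-up merely makes explicit the cell-bookkeeping (same orbit representatives, stabilizers $\rho^{-1}(H_\lambda)$, compatibility with restrictions via the commuting square in the lemma) that the paper compresses into two sentences.
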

\begin{proof}
Fix a $G$-cellular structure of $\underbar{E}G$.  Associate  to each equivariant cell in $\underbar{E}G$ of the form $G/H\times D^n$ a cell in $\underbar{E}G_\alpha$ of the form $G_\alpha/\rho^{-1}(H)\times D^n$.  

Consider the cellular cochain complex of $\underbar{E}G.$ In degree $n$, it has the form $$C_G^n(\eub{G},M)= \underset{\lambda}{\bigoplus }Hom_{\mathbb{Z}}(\mathbb{Z}[e_{\lambda}], { }^{\alpha}\calr(G/H_{\lambda})).$$
  
From \ref{lemmauntwisting}, this term is isomorphic (via $\rho^*$) to 
$$C_G^n(\eub{G},M)= \underset{\lambda}{\bigoplus }Hom_{\mathbb{Z}}(\mathbb{Z}[e_{\lambda}], \calr_1(G_\alpha/\rho^{-1}(H_{\lambda})))$$  
and the isomorphism commutes with the cellular boundary, thus determining a  chain  isomorphism 
 
$$C^*(\underbar{E}G;{ }^{\alpha}\calr^?)\xrightarrow{\rho^*} {C}^*(\underbar{E}G_\alpha;\calr_1^{?}),$$
which  induces  an isomorphism in Bredon cohomology.
\end{proof}

\begin{corollary}
Let $G$ be a discrete group and  let $\alpha\in Z^2(G;S^1)$ be a cocycle taking values in $\IZ/n\IZ$. Consider the extension associated to $\alpha$
$$\xymatrix{1\ar[r]&\IZ/n\IZ\ar[r]&G_\alpha\ar[r]^{\rho}&G\ar[r]&1.}$$
Then,  there  exists  an   isomorphism  of abelian  groups
$$\ktheory{\alpha}{G}{*}{\eub{G}} \cong  \ktheory{}{G_\alpha}{*}{\eub{G_\alpha}},$$
between  the  $\alpha$-twisted, $G$-equivariant  $K$-Theory  and  the  untwisted  $G_\alpha$-equivariant  $K$-theory   of  the  classifying  spaces  for  proper  actions $\eub{G}= \eub{G_\alpha}$.

\end{corollary}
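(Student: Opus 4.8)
The plan is to deduce this corollary from Theorem \ref{theoremuntwistingcentral} by feeding the Bredon-cohomological isomorphism through the Atiyah--Hirzebruch-type spectral sequence of Theorem \ref{spectral}. First I would apply that spectral sequence on both sides of the claimed identification. On the source side, since $\alpha$ is a torsion cocycle of order $n$, Theorem \ref{spectral} produces a spectral sequence whose $E_2$-page in even total degree is $H^p_G(\eub{G};\calr_\alpha^?)$ and vanishes in odd $q$, converging to ${}^{\alpha}K^{p+q}_G(\eub{G})$. On the target side, the twisting is trivial, so the same machinery (applied to $G_\alpha$ with the trivial cocycle, whose associated coefficient system is exactly the $1$-central representation functor $\calr_1^{G_\alpha}$) yields a spectral sequence with $E_2$-page $H^p_{G_\alpha}(\eub{G_\alpha};\calr_1^{?})$ converging to ${}^{}K^{p+q}_{G_\alpha}(\eub{G_\alpha})$. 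The key point, already recorded in the remark preceding Definition \ref{definitionktheory}, is that untwisted equivariant $K$-theory of $\eub{G_\alpha}$ is computed by a Bredon cohomology whose coefficients are the representation rings of the finite isotropy groups $\rho^{-1}(H)$, and restricting to $1$-central representations is the correct coefficient system because the central $\IZ/n\IZ$ must act trivially in the untwisted theory.

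Next I would invoke the naturality of this spectral sequence with respect to the functor $\rho^*:\Or_\calfin(G)\to\Or_\calfin(G_\alpha)$ induced by the extension map. Since $\rho$ identifies $\eub{G}$ with $\eub{G_\alpha}$ as the same underlying space (the action of $G_\alpha$ factors through $\rho$, as noted in the statement of Theorem \ref{theoremuntwistingcentral}), the assignment $G/H\times D^n \mapsto G_\alpha/\rho^{-1}(H)\times D^n$ sets up a cellular comparison of the two equivariant CW-structures. Theorem \ref{theoremuntwistingcentral} then asserts precisely that $\rho^*$ induces an isomorphism on $E_2$-pages,
$$H^*(\eub{G};\calr_\alpha^?)\xrightarrow{\;\cong\;}H^*(\eub{G_\alpha};\calr_1^{?}).$$
Because both spectral sequences are concentrated in even $q$ with zero differentials forced by the checkerboard vanishing (there is no room for a $d_2$ or higher differential, as every differential lands in a column of the opposite $q$-parity), both spectral sequences collapse at $E_2$. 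An isomorphism of collapsed $E_2$-pages, compatible with the filtrations via $\rho^*$, then produces an isomorphism of the associated graded pieces of the abutments, and I would assemble these into the desired isomorphism of $K$-groups.

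The one subtlety I would treat with care is the passage from an isomorphism of associated graded objects to a genuine isomorphism of the abutment $K$-groups, and the question of whether extension problems could obstruct it. The clean way around this is to observe that the comparison is induced by an honest map of spectral sequences (coming from the cellular chain-level isomorphism $\rho^*$ built in the proof of Theorem \ref{theoremuntwistingcentral}), not merely an abstract isomorphism of $E_2$-terms; a map of spectral sequences that is an isomorphism on $E_2$ is an isomorphism on $E_\infty$, and hence induces an isomorphism of the filtered abutments by the five lemma applied inductively along the filtration. Since both filtrations are finite (the complex $\eub{G}$ is finite-dimensional in the cases of interest, or one works degreewise), no $\lim^1$ issues arise.

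The main obstacle I anticipate is bookkeeping rather than conceptual: one must verify that the coefficient system $\calr_1^{G_\alpha}$ really is the one computing \emph{untwisted} $G_\alpha$-equivariant $K$-theory, i.e.\ that the Bredon spectral sequence for the trivial twist on $G_\alpha$ has the $1$-central representation rings as coefficients. This is exactly the identification that Lemma \ref{lemmauntwisting} and Theorem \ref{theoremuntwistingcentral} are set up to provide, so the corollary reduces to transporting that coefficient-level statement across the two (collapsing) spectral sequences; the remaining work is the naturality check, which is routine given the cellular model.
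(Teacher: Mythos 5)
Your overall route --- feed the Bredon isomorphism of Theorem \ref{theoremuntwistingcentral} into the spectral sequence of Theorem \ref{spectral} on both sides --- is exactly the paper's (two-line) proof, and your extra care with the chain-level map of spectral sequences and the five lemma along the filtration is a genuine improvement. But there is a real error in the step you yourself single out as the crux: the identification of the coefficient system on the untwisted side. Your justification, that ``restricting to $1$-central representations is the correct coefficient system because the central $\IZ/n\IZ$ must act trivially in the untwisted theory,'' is false on both counts. In a $1$-central representation the generator of the central $\IZ/n\IZ$ acts by $e^{2\pi i/n}$, not trivially (Definition \ref{rep defzklineal}); and in honest untwisted $G_\alpha$-equivariant $K$-theory of $\eub{G_\alpha}$ the center, which lies in \emph{every} isotropy group because $\IZ/n\IZ$ acts trivially on the space, may act fiberwise by any character. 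Consequently the trivial-cocycle instance of Definition \ref{definitiontwistedcoefficients} for $G_\alpha$ has as coefficients the full representation rings $R(\rho^{-1}(H))\cong\bigoplus_{k=0}^{n-1}R_k(\rho^{-1}(H))$, not $\calr_1$; the spectral sequence you set up on the right-hand side therefore converges to all of $K^*_{G_\alpha}(\eub{G_\alpha})$ in the naive sense, which is strictly larger (already for $G$ finite and $X$ a point one would be claiming $R_\alpha(G)\cong R(G_\alpha)$, which fails). The $\calr_1$-Bredon cohomology computes only the summand of $K^{*}_{G_\alpha}(\eub{G_\alpha})$, under the splitting \ref{splitting}, on which the center acts by the primitive character --- and by Definition \ref{definitionktheory} that summand \emph{is} the $\alpha$-twisted theory ${}^{\alpha}K^*_G(\eub{G})$. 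That is how the corollary's right-hand side must be read, and it is precisely the identification Lemma \ref{lemmauntwisting} supplies on coefficients. Had the center acted trivially, as you assert, the coefficients would be $\calr_0$, i.e.\ representations factoring through $H$, and your argument would compute the untwisted $K^*_G(\eub{G})$ --- the wrong statement.

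A smaller slip: concentration of the $E_2$-page in even $q$ does \emph{not} force collapse. The differential $d_r$ has bidegree $(r,1-r)$, so checkerboard vanishing kills $d_2$ (and all even-indexed differentials) but preserves $q$-parity for odd $r$; the classical Atiyah--Hirzebruch spectral sequence is concentrated in even $q$ and nonetheless carries a nontrivial $d_3$. Where the paper obtains collapse (in its computations for $\sldrei$), it does so because the Bredon cohomology is concentrated in degrees $\leq 1$, not because of $q$-parity. Fortunately this does not damage your argument: your third paragraph correctly observes that the comparison is induced by the chain-level isomorphism $\rho^*$ of Theorem \ref{theoremuntwistingcentral}, and a map of spectral sequences that is an isomorphism on $E_2$ is an isomorphism on $E_\infty$ and on the (finitely filtered) abutments whether or not any differentials vanish. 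So once the coefficient identification is repaired as above, your proof goes through and coincides with the paper's.
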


\begin{proof}
From Theorem \ref{theoremuntwistingcentral},  the  Bredon  cohomology  groups are  all  isomorphic. The  spectral sequence  \ref{spectral}  lets  us  conclude  the  desired  isomorphism.  

\end{proof}

\section{Twistings in $\sldrei$ and  $\stein$. } \label{sectiontwists}

\subsection*{The  cohomology  of  $\sldrei$}
We recall the analysis  of  the  cohomology  of  $\sldrei$ in \cite{BAVE2013}. Soul\'e proved in \cite{Soule(1978)} that the  integral  cohomology  of  $\sldrei$ only  consists  of  $2$ and  $3$-torsion.  The  3-primary  part  is  isomorphic  to  the  graded   algebra 
$$\mathbb{Z}[x_{1}, x_{2}]/\langle3x_{1},3x_{2}\rangle$$     
 with  both  generators  in  degree 4. 

The  two-primary  component  is  isomorphic  to  the   graded  algebra  
$$\mathbb{Z}[u_{1}, \ldots, u_{7}]$$
with    respective degrees 3, 3, 4, 4, 5, 6, 6, subject to the   relations 
$$2u_{1}=2u_{3}=4 u_{3}=4u_{4}=2u_{5}=2u_{6}=2u_{7}=0$$
$$\so{7}{} \so{1}{} = \so{7}{}\so{4}{}=\so{7}{}\so{5}{}=\so{7}{}\so{6}{}= \so{2}{} \so{5}{}= \so{2}{} \so{6}{}=0 $$
$$\so{7}{2}+\so{7}{}\so{2}{2}=\so{3}{}\so{4}{}+\so{1}{} \so{5}{}=\so{3}{} \so{6}{}+\so{3}{}\so{1}{2}=\so{3}{}\so{6}{}+ \so{5}{2}=0$$
$$\so{1}{} \so{6}{}+\so{4}{}\so{5}{}=\so{3}{0} \so{4}{2}+\so{6}{2}=\so{5}{} \so{6}{}+ \so{5}{} \so{1}{2}=0$$

The  twistings  in  equivariant  $K$-theory  are  given  by   classes  in  $H^{3}(\sldrei, \mathbb{Z})$ all of which are 2-torsion.  For  this  reason,  we  shall  restrict  to  the  two-primary  component (we indicate this with the subscript $(2)$)  in  integral  cohomology. In  order  to  have  a  local  description  of  these  classes,  we   describe   the cohomology  of  some  finite  subgroups  inside $\sldrei$.

The  finite  groups  of  $\sldrei$   include  $S_4$,  the  symmetric  group in  four  letters, $D_4$, the  dihedral  group of  order  8, 
the  dihedral  group  of  order 12, $D_6$,  as  well  as   the  group  of  order  two denoted  by  $C_2$.

  Theorem 4 in page 14 of \cite{Soule(1978)}, gives the following result:
For all $n\in\IN$ there exists an exact sequence of abelian groups
$$0\rightarrow H^n(\sldrei)_{(2)}\xrightarrow{\phi}H^n(S_4)_{(2)}\oplus H^n(S_4)_{(2)}\oplus H^n(S_4)_{(2)}\xrightarrow{\delta} H^n(D_4)\oplus H^n(C_2)\rightarrow 0$$
where $\phi$  is given by  restrictions (see \cite{Soule(1978)}[2.1(b), Cor.]) and $\delta$ by the system of embeddings

 $$\xymatrix{&&\sldrei\\S_4\ar[urr] & & S_4\ar[u]& & S_4\ar[ull]\\
  & D_4\ar[ul]^{i_2}\ar[ur]_{i_1}& & C_2\ar[ul]^{j_1}\ar[ur]_{j_2}\\
    }.	 $$

 If $R$ is as in Proposition 4 in \cite{Soule(1978)}, 
 the image of the morphism $\phi: H^*(\sldrei)_{(2)}\rightarrow H^*(S_4)_{(2)}\oplus (i_1^*)^{-1}(R)$, is the set of elements $(y, z)$ such that 
 $j_2^*(y) = j_1^*(z).$ From the paper of Soul\'e, we know that $H^*(S_4)_{(2)} = \IZ[y_1, y_2,y_3]$, with 
 $2y_1 = 2y_z = 4y_3 = y_1^4+ y_2^2y_1 + y_3y_1^2= 0$, and, 
  $(i_1^*)^{-1}(R)=\IZ[z_1, z_2, z_3]$, with $2z_1 = 4z_2 = 2z_3 = z_3^2 + z_3z_1^2 = 0$. Furthermore $j_2^*(y_1) = t$, $j_2^*(y_2) = 0,$
$j_2^*(y_3) = t^2$, $j_1^*(z_1) = 0$, $j_1^*(z_2) = t^2$, and $j_1^*(z_3) = 0$. Then the elements $u_1 = y_2$, $u_2 = z_1$, $u_3 =y_1^2+ z_2$, $u_4 = y_1^2+ y_3$, $u_5 = y_1y_2$,  $u_6 = y_1y_3 + y_1^3$ and $u_7 = z_3$ generate $\phi(H^*(\sldrei)_{(2)})$.
  
 In $H^3(\ )$ the above discussion can be summarized in the following diagram
  


   \begin{equation}\label{diagram}\small
  \xymatrix{&  \langle u_1,u_2\rangle=H^3(\sldrei)\ar[dl]^{i^*}\ar[d]^{i^*}\ar[dr]^{i^*}\ar[drr]^{i^*}\\
  \langle z_1\rangle\subseteq H^3(S_4)\ar[d]^{i_1^*} &\langle z_1\rangle\subseteq H^3(S_4)\ar[dl]^{i_2^*}\ar[d]^{j_1^*} & \langle y_2\rangle \subseteq H^3(S_4)\ar[dl]^{j_2^*}
  \ar[d]&\langle y_2\rangle \subseteq H^3(D_6)\ar[dl]\\
   \langle x_3\rangle \subseteq H^3(D_4)\ar[d]& 0 & \langle y_2\rangle \subseteq H^3(D_2)\\
   \langle x_3\rangle \subseteq H^3(D_2).
  }
  \end{equation}
 In  the  Following  section  we  will  give  explicit  generators   and   analyze  the  depicted embeddings  in  $\sldrei$.

\subsection*{The  cohomology  of  $\stein$. } 
 
The  following result  was  published  as Theorem 8, page  17 in  \cite{Soule(1978)}, see also  section 4  in \cite{tezukayagita}, page  92 for  a more precise account.

\begin{theorem}
\begin{itemize}
\item There  exists  a 3  torsion cohomology  class  $\xi\in H^4( \stein, \mathbb{Z})$ such  that, for  any  $\stein$-module $A$,  the  cup  pruduct  by  $\xi$ induces  an  isomorphism 
$$\cdot \cup \xi : H^k(\stein, A) \to H^{k+4}(\stein, A)$$
as  soon  as  $k>3$  and  $k>0$  when $A$  is  constant. 

\item The  ring  $H^*(\stein, \mathbb{Z})_2$  is  generated  by  elements $w_1$, $w_2$, $w_3$ with  respective  degrees  $3$, $4$, $4$,  submitted  to the  defining  relations $2w_1= 4w_2=16w_3={w_1}^2= w_1w_2=w_2w_3=0$. Hence $H^1(\stein, \IZ)= H^2(\stein, \IZ)=0$, $H^3(\stein, \IZ)=\IZ/2$, $H^4(\stein, \IZ)= \IZ/16\oplus \IZ/4\oplus \IZ/3$. 
   
\end{itemize}

\end{theorem}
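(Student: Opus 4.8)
The plan is to compute $H^*(\stein;A)$ geometrically, exploiting the fact that $\stein$ acts on Soul\'{e}'s contractible complex $\eub{\sldrei}$ through the projection $p\colon \stein \to \sldrei$, cocompactly and with finite stabilizers, so that $\vcd(\stein)=\vcd(\sldrei)=3$. The essential observation is that every finite subgroup of $\stein$ has the form $p^{-1}(H)$ for a finite $H\leq \sldrei$, i.e.\ a central $\IZ/2\IZ$-extension of $H$ classified by the restriction of $u_1+u_2$; by the restriction analysis recalled in Section~\ref{sectiontwists}, this class is non-trivial wherever $H$ has $2$-torsion, so the relevant preimages are the binary (Schur covering) groups --- the binary octahedral group $2O$ over $S_4$, generalized quaternion groups $Q_{16}$ over $D_4$, binary dihedral groups over $D_6$, and the cyclic cover $\IZ/4\IZ$ over $C_2$. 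Each of these has a \emph{unique} element of order two, namely the central generator, hence $\stein$ has $p$-rank at most one at every prime. First I would establish this local structure by tracing $u_1+u_2$ through the restriction diagram \eqref{diagram}.

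Granting the rank statement, the periodicity in the first assertion follows from Farrell--Tate cohomology: a group of finite virtual cohomological dimension all of whose finite subgroups have periodic cohomology has periodic Farrell--Tate cohomology $\widehat{H}^*(\stein;A)$. I would compute the common period to be $4$ by analyzing the Weyl-type action of the normalizers of the maximal finite cyclic subgroups (the central $-1$ acting by inversion forces period $4$ rather than $2$) and identify the periodicity generator as a class $\xi \in H^4(\stein;\IZ)$ whose $3$-torsion component realizes the $3$-primary periodicity. Because $\vcd(\stein)=3$, the comparison map $H^k(\stein;A)\to \widehat{H}^k(\stein;A)$ is an isomorphism for $k>3$, which transports the periodicity to ordinary cohomology in exactly the stated range; the sharper bound $k>0$ for constant coefficients comes from the low-degree vanishing $H^1=H^2=0$ together with the connectivity of $\eub{\sldrei}$.

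For the ring structure I would run the isotropy spectral sequence of the $\stein$-action on $\eub{\sldrei}$, equivalently the Lyndon--Hochschild--Serre spectral sequence of $1\to \IZ/2\IZ \to \stein \to \sldrei \to 1$, whose $E_2$-page is assembled from the groups $H^*(\sldrei)$ recalled above together with $H^*(\IZ/2\IZ;\IZ)$. The transgression $d_3$ sends the degree-two generator of the fibre to the integral extension class $u_1+u_2$, and the higher differentials are then forced multiplicatively. The generators $w_1 \in H^3$ and $w_2,w_3 \in H^4$ are read off, and I would \emph{pin down their additive orders by restriction to the finite subgroups}: restriction to $2O$, whose integral cohomology is periodic with $H^4(2O;\IZ)\cong \IZ/16\oplus \IZ/3$, detects the order-$16$ class $w_3$ and the $3$-torsion of $\xi$; restriction to the cyclic cover $\IZ/4\IZ$ detects $w_2$ of order $4$; while $w_1$, which sits at the virtual cohomological dimension and restricts trivially to the periodic finite subgroups, is detected on the $E_\infty$-page as a surviving class of $H^3(\sldrei)$. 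The relations $w_1^2=w_1w_2=w_2w_3=0$ follow because, by the rank-one periodic structure, the target degrees leave no room for nonzero products, as can be verified on the detecting subgroups.

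The principal obstacle is the resolution of the extension problems in the spectral sequence --- in particular producing the precise order $16$ of $w_3$ rather than a smaller cyclic group, which I expect to require the full multiplicative comparison with $H^*(2O;\IZ)$ and not merely the additive $E_\infty$-page. A secondary difficulty is making the periodicity precise across both primes at once and establishing the sharp degree bound, that is, controlling the comparison between ordinary and Farrell--Tate cohomology and the interaction of the $2$- and $3$-primary periodic structures.
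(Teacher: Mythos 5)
You should first be aware that the paper does not prove this statement at all: it is Soul\'e's Theorem 8 quoted verbatim from \cite{Soule(1978)} (with \cite{tezukayagita} cited for a sharper account), so your proposal can only be measured against Soul\'e's argument, whose outline the paper recalls in Section \ref{sectiontwists}. Against that outline, your ``essential observation'' is false: it is not true that every finite subgroup of $\stein$ has the form $p^{-1}(H)$. Any subgroup of $\sldrei$ on which the class $u_1+u_2$ restricts trivially lifts isomorphically --- for instance every odd-order subgroup such as $C_3$, since $H^2(C_3;\IZ/2)=0$ --- and the paper warns about exactly this in its introduction (``more finite subgroups appear in $\stein$ that are not a Schur covering group for any finite group of $\sldrei$''). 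The repair is the one Soul\'e actually uses and the paper records: every finite subgroup of $\stein$ is conjugate into the maximal compact subgroup $\Spin_3\cong S^3$ of $St_3(\IR)$, hence is cyclic or binary polyhedral, has a unique involution, and acts freely on $S^3$, giving $4$-periodic cohomology with $H^4(F;\IZ)\cong\IZ/|F|$ (your $H^4(2O;\IZ)\cong\IZ/48\cong\IZ/16\oplus\IZ/3$ is correct and is an instance of this). Rank-one-ness should be derived from the ambient $S^3$, not from a classification of preimages $p^{-1}(H)$, which is both wrong as stated and unnecessary.

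Beyond that repairable error there are two genuine gaps. First, you declare the isotropy spectral sequence of the $\stein$-action on $\eub{\sldrei}$ ``equivalent'' to the Lyndon--Hochschild--Serre spectral sequence of $1\to\IZ/2\to\stein\to\sldrei\to1$; these are different tools with different input pages: the former has $E_1$-term a sum over the cells of Soul\'e's complex of the cohomology of the binary stabilizers (the table in Section 6, with $S_4^*$, $D_4^*$, etc.), while the latter has $E_2^{p,q}=H^p(\sldrei;H^q(\IZ/2;\IZ))$. Soul\'e computes with the former; in the integral LHS sequence nothing is ``forced multiplicatively'' amid this much torsion, and the unresolved differentials and extension problems are precisely where the order $16$ of $w_3$, the summand $\IZ/16\oplus\IZ/4$ in degree $4$, and the relations $w_1^2=w_1w_2=w_2w_3=0$ are decided --- you concede you cannot settle these, but they are the theorem. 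Second, the Farrell--Tate step is invoked in a generality in which it is not a theorem: periodicity of all finite subgroups of a group of finite virtual cohomological dimension does not formally produce a global periodicity class of period $4$; one needs the cocompact action with finitely many conjugacy classes of periodic stabilizers and an explicit class $\xi\in H^4(\stein;\IZ)$ restricting to a periodicity generator on each stabilizer, which again is the substance of Soul\'e's proof rather than a formal consequence. The parts that do stand up are the comparison range (the map $H^k\to\widehat{H}^k$ is an isomorphism for $k>3$ since the virtual cohomological dimension is $3$) and the sharpening to $k>0$ for constant coefficients once $H^1(\stein;\IZ)=H^2(\stein;\IZ)=0$ is known ($\stein$ is perfect and $H_2(\stein;\IZ)$ is finite), though those vanishing statements are themselves part of what must be established. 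In sum: a plausible reconstruction of Soul\'e's strategy, but with one false structural lemma and the decisive computations left open.
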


The  cohomology   of  $\stein$  is seen  to  be  completely  determined  by   the  classes $w_1, w_2, w_3$, as  well  as  the  periodicity  class  $\xi$. The  classes  $w_i$ restrict  non-trivially  to  some  specific   generators  of  the  cohomology  of finite subgroups. We will  analyze  briefly  how  they  relate  to  the  generating classes  $u_1$, $u_2$ of  $H^3(\sldrei, \IZ)$. This  is  a  summary  of  the  discussion in  Lemma 9, and the  proof  of Theorem 8 in \cite{Soule(1978)}.

 The  group $St_3(\mathbb{R})$   fits  as  a  central  extension $1\to\IZ/2 \to  St_3(\mathbb{R}) \to  SL_3(\mathbb{R}) \to 1,  $ which  restricts  to a central  extension  of  lattices $1\to\IZ/2 \to \stein \to  \sldrei \to 1.$

 The  maximal  compact  subgroups  of   $St_{3}(\mathbb{R})$,  respectively $SL_{3}(\mathbb{R})$, are  $\Spin_3$,  respectively $SO(3)$. Hence, all  finite  subgroups  of $\stein$ are  contained  in $\Spin_3$,  which  is  homeomorphic  to the $3$-dimensional  sphere, thus  the  cohomology  of  all  finite  subgroups  in  $\stein$  is 4-periodic. This   is  the  origin  of  the periodicity class  $\xi$. 
 
 The  class  $w_1$ restricts  nontrivially  under  a  system  of  inclusions of  finite  groups $$\xymatrix{   S_4^* & &  S_4^* \\ & D_4^* \ar[ru] \ar[lu]  &  }, $$   which covers  the  inclusions $i_1, i_2:  D_4\rightrightarrows S_4$ in $\sldrei$.
 
 Thus, $u_1$  maps  to  $w_1$, and $u_2$ maps  to  the  trivial  class  under  the   map  induced  by the universal cover $\stein \to \sldrei$ in cohomology.

\begin{remark}\label{remarkcentralstein}[The  universal  central  extension of  $ \sldrei$  and $\stein$.]

In  the early  literature on  the  Steinberg  group (particularly  Steinberg' s  Yale  notes \cite{steinberg}),  there  is  an  unfortunate  identification  of  $\stein$  with the  universal  central  extension  of  $\sldrei$. This  mistake  has  been  repeated  in the  literature \cite{Soule(1978)}, 2.7  and  \cite{delaharpebekka}, Example  IV. 

Denote  by  $\widetilde{SL_n(\IZ)}$  the  universal  central  extension  of  $SL_n(\IZ)$. It  fits  in an  exact  sequence  
$$1\to H_2(SL_n(\IZ), \IZ) \to  \widetilde{SL_n(\IZ)}\to SL_n(\IZ).$$

 While  there  is  an  identification  of  $St_n(\IZ)$ with $\widetilde{SL_{n}(\IZ)}$  for  $n\geq  5$, Van der  Kallen \cite{vanderkallen}  computes  the  Schur  Multiplier $H_2(G, \IZ)$  for  $G$ $\sldrei$  and $SL_4(\IZ)$,  being  in  both  cases isomorphic  to Klein's Four  group $\IZ/ 2\IZ \oplus \IZ/ 2 \IZ$.  Thus, the  universal  central  extension  defining $\stein$, 
 $$ 1\to \IZ/ 2\IZ \to \stein \to  \sldrei,$$
 and the  one  defining  $\widetilde{\sldrei}$  
 $$1\to \IZ/ 2\IZ \oplus \IZ/2\IZ \to  \widetilde{\sldrei} \to \sldrei$$ 
 are  not  the  same. We  thank  Prof. Pierre De  la  Harpe  for  pointing  this  fact  to  us  on  personal  correspondence,  leading  to  the  correction  of a  mistake  in  a  previous  version  of  this  note. 
  
 \end{remark}

\section{Twisted K-theory of $\sldrei$}
We use the following notations: $\{1\}$ denotes the trivial group, $C_n$ the cyclic group
of $n$ elements, $D_n$ the dihedral group with $2n$ elements and $S_n$ the Symmetric group of permutations on $n$ objects.

There are  four twistings for $\sldrei$  up to cohomology, namely $0,u_1,u_2,u_1+u_2$, continuing the work started in \cite{BAVE2013}, we will calculate the twisted K-theory for the 
twistings $u_2$ and $u_1+u_2$.

From diagram \ref{diagram}, one can see that the class $u_2$ restricts nontrivially to two copies of $S_4$ corresponding to the stabilizer of the vertices $v_3$ and $v_5$. We recall the $\sldrei$-CW-complex structure of $\underbar{E}\sldrei$ as is given in \cite{Soule(1978)}. The labels $O$, $Q$, $M$, $N$, $P$ of the vertices refer to the Figure 2 of \cite{Sanchez-Garcia(2006SL)}, where also Soul\'es matrices
$g_1,\ldots, g_{14}$ are recalled.

\vspace{0.5cm}
\begin{center}
\begin{tabular}{ccccccccc}
\hline
vertices & & & & &2-cells\\
\hline
$v_1$ & $O$ & $g_2$, $g_3$ & $S_4$& &$t_1$ & $OQM$ & $g_2$ & $C_2$\\
$v_2$ & $Q$ & $ g_4$, $g_5$& $D_6$&&$t_2$ & $QM'N$ & $g_1$ & $\{1\}$\\
$v_3$ & $M$ &  $g_6$, $g_7$&  $S_4$& &$t_3$ & $MN'P$ & $g_{12},g_{14}$ & $C_2\times C_2$\\
$v_4$ & $N$ & $ g_6$, $ g_8$ & $ D_4$&& $t_4$ & $OQN'P$ & $g_5$ & $C_2$\\
$v_5$ & $P$ & $g_5$ , $g_9$ & $ S_4$ && $t_5$ & $OMM'P$ & $g_6$ & $C_2$\\
\hline
edges&&&&& 3-cells\\
\hline
$e_1$ & $OQ$ & $g_2$, $g_5$ &  $C_2\times C_2$ && $T_1$& $g_1$ & $\{1\}$\\
$e_2$ & $OM$ & $g_6, g_{10}$ & $D_3$\\
$e_3$ & $OP$ & $ g_6, g_5$ & $D_3$\\
$e_4$ & $QM$ & $g_2$ & $C_2$\\
$e_5$ & $QN'$ & $g_5$ & $C_2$\\
$e_6$ & $MN$ & $g_6, g_{11}$ & $C_2\times C_2$\\
$e_7$ & $M'P$ & $g_6, g_{12}$ & $ D_4$\\
$e_8$& $ N'P$ & $ g_5, g_{13}$ & $ D_4$\\

\hline

\end{tabular}
\end{center}
\vspace{0.5cm}

The first column is an enumeration of equivalence classes of cells; the second lists a representative of each class;
the third column gives generating elements for the stabilizer of the given representative; and the last one is the
isomorphism type of the stabilizer. The
generating elements referred to above are the same as in \cite{BAVE2013}.

\subsection*{The  twisting $u_1$.}
The  following  theorem  was  proved  in \cite{BAVE2013}: 

\begin{theorem}\label{theoremtwistedktheoryu1}
$${ }^{u_1}K^0_{\sldrei}(\underbar{E}\sldrei)\cong\IZ^{\oplus13},$$

$${ }^{u_1}K^1_{\sldrei}(\underbar{E}\sldrei)= 0.$$
\end{theorem}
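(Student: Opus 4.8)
The plan is to feed the $u_1$-twisted Bredon cohomology of $\underbar{E}\sldrei$ into the Atiyah--Hirzebruch type spectral sequence of Theorem~\ref{spectral}, whose $E_2$-page is $E_2^{p,q}=H^p_{\sldrei}(\underbar{E}\sldrei,\calr_{u_1}^?)$ for $q$ even and $0$ for $q$ odd, and which converges to ${ }^{u_1}K^{p+q}_{\sldrei}(\underbar{E}\sldrei)$. Since Soul\'e's model for $\underbar{E}\sldrei$ is a finite, three-dimensional proper $\sldrei$-CW-complex, the computation reduces to determining the four groups $H^p_{\sldrei}(\underbar{E}\sldrei,\calr_{u_1}^?)$ for $p=0,1,2,3$, and then controlling the differentials and extension problems.

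First I would assemble the Bredon cochain complex from the cell structure tabulated above. For each orbit representative $e_\lambda$ with finite stabilizer $H_\lambda$, the cochain group in degree $p$ is $\bigoplus_\lambda R_{u_1\mid_{H_\lambda}}(H_\lambda)$, and the differential is the alternating sum of the twisted restriction maps $\calr_{u_1}^?(\phi)$ attached to the face inclusions. The essential input is therefore the twisted representation rings of the occurring stabilizers $S_4$, $D_6$, $D_4$, $D_3$, $C_2\times C_2$, $C_2$ and $\{1\}$, together with the restriction homomorphisms between them.

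The decisive simplification is to locate where $u_1$ restricts nontrivially. From diagram~\ref{diagram} and the discussion of the class $w_1$, the restriction $u_1\mid_{H_\lambda}$ is a coboundary for every stabilizer except the copy of $S_4$ at the vertex $O=v_1$ and the $D_6$ at $Q=v_2$ (together with the subgroups through which that twist factors); note in particular that $u_1$ restricts trivially to $C_2\times C_2$ since $j_2^*(y_2)=0$. For all untwisted cells the coefficient group is thus the ordinary representation ring $R(H_\lambda)$. Over the genuinely twisted cells I would compute $R_{u_1\mid_{S_4}}(S_4)$ and $R_{u_1\mid_{D_6}}(D_6)$ as the groups generated by the projective (spin) representations, equivalently, by Lemma~\ref{lemmauntwisting}, the $1$-central representations of the double covers $S_4^*$ and $D_6^*$ on which the central $\IZ/2$ acts by $-1$. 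Having fixed all coefficient groups and restriction maps, I would compute the cohomology of the resulting finite cochain complex, expecting $H^1=H^3=0$ and $H^0$, $H^2$ free with $\rk H^0+\rk H^2=13$.

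Finally I would run the spectral sequence. Because the columns are concentrated in even $q$, the differential $d_2$ vanishes for degree reasons, and the only potentially nonzero higher differential is $d_3\colon E_3^{0,q}\to E_3^{3,q-2}$; the vanishing of $H^3_{\sldrei}(\underbar{E}\sldrei,\calr_{u_1}^?)$ forces it to be zero, so the sequence collapses at $E_2$. As all surviving terms are free abelian, the extension problems assembling ${ }^{u_1}K^0_{\sldrei}(\underbar{E}\sldrei)$ from $E_\infty^{0,*}$ and $E_\infty^{2,*}$ split, yielding $\IZ^{\oplus13}$, while the odd total degrees contribute nothing, giving ${ }^{u_1}K^1_{\sldrei}(\underbar{E}\sldrei)=0$. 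I expect the main obstacle to be the explicit determination of the twisted representation rings $R_{u_1\mid_{S_4}}(S_4)$ and $R_{u_1\mid_{D_6}}(D_6)$ and of the twisted restriction maps emanating from them, since these govern both the rank $13$ and the vanishing of the odd Bredon cohomology; the remaining steps are bookkeeping over the known cell structure.
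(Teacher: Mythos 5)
You should first note that the paper contains no proof of this statement at all: Theorem \ref{theoremtwistedktheoryu1} is imported verbatim from \cite{BAVE2013} (``The following theorem was proved in \cite{BAVE2013}''). So the fair comparison is with the method the paper itself runs for the twists $u_2$ and $u_1+u_2$, and at the level of strategy you reproduce that method faithfully: Soul\'e's finite $3$-dimensional model, the Bredon cochain complex with twisted representation groups as coefficients, rectified restriction maps, and the collapse of the spectral sequence of Theorem \ref{spectral} (your $d_2$/$d_3$ and extension arguments are fine).

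There is, however, a genuine and outcome-changing error in your identification of where $u_1$ restricts nontrivially. Your claim that ``$u_1$ restricts trivially to $C_2\times C_2$ since $j_2^*(y_2)=0$'' misreads Soul\'e's notation: $j_2$ embeds a \emph{cyclic} group $C_2$ into $S_4$, not the Klein four group. Diagram \ref{diagram} records exactly the opposite of what you assert: its entry $\langle y_2\rangle\subseteq H^3(D_2)$ is the \emph{nonzero} restriction of $u_1$ to the Klein four group $D_2=\langle g_2,g_5\rangle=\mathrm{stab}(e_1)$, the common subgroup of $\mathrm{stab}(v_1)\cong S_4$ and $\mathrm{stab}(v_2)\cong D_6$ sitting under both arrows from the $y_2$-copies. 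The paper confirms this in Remark \ref{u1u22}, where in the block decomposition of $\Phi_2^{u_1+u_2}$ the row of $e_1$ carries $u_1$-twisted blocks, unlike the rows of $e_2,\dots,e_5$, which carry $u_0$. Concretely, the coefficient group at $e_1$ must be $R_{u_1\mid}(C_2\times C_2)\cong\IZ$ (the twisted group algebra of the Klein group at the nontrivial cocycle is $M_2(\IC)$, so there is a single projective irreducible), not $R(C_2\times C_2)\cong\IZ^4$ as your setup dictates. This is not absorbed later in the argument: it raises the rank of the degree-one cochain group by $3$, and since $\operatorname{rk}{}^{u_1}K^0-\operatorname{rk}{}^{u_1}K^1$ equals the alternating sum of the ranks of the cochain groups (no differential or extension problem can change that), your complex provably computes different $K$-theory groups; moreover the restriction maps out of the twisted $S_4$ at $v_1$ and the twisted $D_6$ at $v_2$ --- which you yourself flag as the decisive input --- would be computed against the wrong target at $e_1$. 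A smaller discrepancy: your hedge that $H^2$ may be nonzero sits uneasily with Section \ref{sectionapplications}, where the duality argument requires (and is said to be verified for $u_1$ in \cite{BAVE2013}) concentration of the Bredon cohomology in degrees $0$ and $1$.
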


\subsection*{The twisting $u_2$.}
In order to determine the twisted K-theory, we calculate  Bredon cohomology.

\subsubsection*{Determination of $\Phi_1$.}

In order to determine the morphism $\Phi_1$, we need to recall the projective character tables of the groups where $u_2$ restricts \textit{non trivially}.

Here we denote by $z$ the generator of the central copy of $\IZ_2$.
The linear  character  table  of a  Schur  covering   group ${S_{4}^*}$    is  obtained  on page  254, volume  3   of  \cite{karpilovsky} by considering  the  group   with   presentation

$$S_4^*=\langle h_1,h_2,h_3,z\mid h_i^2=(h_jh_{j+1})^3=(h_kh_l)^2=z, z^2=[z,h_i]=1\rangle$$ 
 $$1\leq i\leq 3,  j= 1, k\leq l-2$$

 and  the  central  extension 
$$1\to \langle z\rangle \to S_4^*\overset{f}{\to} S_4\to 1$$
given by $f(h_i)=g_i$, as  well as the  choice  of  representatives   of  regular conjugacy  classes  as  below.     
\[
\begin{array}{c|cccccccc}
 S_{4}^*& e   & z   & h_1 &  h_1h_3 &  h_1h_2  &  h_1h_2z  & h_1h_2h_3 & h_1h_2h_3z\\\hline
  \epsilon_{1}     &   1        &   1           &    1     &  1       &    1        &        1    &     1     &  1      \\
  \epsilon_{2}     &   1        &   1           &   -1     &  1       &    1        &        1    &     -1    &  -1     \\
  \epsilon_{3}     &   2        &   2           &    0     &  2       &   -1        &       -1    &     0     &   0     \\
  \epsilon_{4}     &   3        &   3           &    1     &  -1      &    0        &       0     &     -1    &   -1    \\
  \epsilon_{5}     &   3        &   3           &   -1     &  -1      &    0        &       0     &     1     &   1     \\\hline
  \epsilon_{6}     &   2        &  -2           &    0     &   0      &   1         &       -1    &  \sqrt{2} &-\sqrt{2}\\
  \epsilon_{7}     &   2        &  -2           &    0     &   0      &   1         &       -1    & -\sqrt{2} & \sqrt{2}\\
  \epsilon_{8}     &   4        &  -4           &    0     &    0     &   -1        &       1     &    0      &    0    \\

\end{array}
\]
 where  the   first  five   lines  are  characters   associated  to  $S_{4}$, and $\epsilon_{6}$  is  the  \emph{Spin } representation. 
 
We take the following presentation of Dihedral groups, $D_{n}=\langle g_{i},  g_{j}\rangle =\langle g_{i}, g_{j} \mid g_{i}^{2}=g_{j}^{2} =(g_{i}g_{j})^{n}=1\rangle$ 

The  dihedral group   of  order   six  has trivial 3  dimensional  integer  cohomology.  Thus  its  projective   representations  do  agree  with  the  linear  ones. The dihedral subgroups with $n$ even in $\sldrei$ are  $C_{2}\times  C_{2}=D_{2}$ and  $D_{4}$. 

The  following  is  the linear   character  table for  $D_{n}$:

\[\begin{array}{c|ccc}
D_{n}& \langle (g_{i}, g_{j})^{k}\rangle  & \langle g_{j}(g_{i} g_{j})^{k} \rangle \\ \hline
\xi_{1} & 1 & 1\\   
\xi_{2}  & 1 &  -1\\ 
\hat{\xi_{3}} &  -1^{k}& -1^{k}\\
 \hat{\xi_{4}} & -1^{k} & -1^{k+1}\\\hline
\phi_{p} & 2\cos(2 \pi p  k/n) &  0 
  \end{array}
\]

where $0\leq k\leq n-1$,  p  varies  from  1  to  $(n/2) -1$ ( $ n$  even)  or  $(n-1)/2$ ( $n$ odd) and  the  hat  denotes  a   representation which  only  appears  in the   case  n  even. 
The  group $D_2^*=\langle h_1, h_3, z\rangle$ is  isomorphic  to the eight elements quaternion group, and a linear  character  table  is  given  by   
\[
\begin{array}{c|ccccc}
 D_2^*& 1   & z   & \{h_1, h_1^{-1}\}&  \{h_3,h_3^{-1}\} &\{h_1h_3, (h_1h_3)^{-1}\} \\\hline
  \eta_{1}     &   1        &   1           &    1     &  1       &    1         \\
  \eta_{2}     &   1        &   1           &   1     &  -1       &    -1          \\
  \eta_{3}     &   1        &   1           &    -1     &  1       &   -1          \\
  \eta_{4}     &   1        &   1           &    -1     &  -1      &    1         \\\hline
  \eta_{5}     &   2        &   -2           &   0     &  0     &    0          \\
\end{array}
\]

A Schur cover of $D_4$ can be taken as $D_8=\langle a,x\mid a^4=x^2=e, xax^{-1}=a^{-1}\rangle$, whose character table is:

\[\begin{array}{c|ccccccc}
 D_8&e&a^4(=z)&a^2&a&a^3(=az)&x&ax\\\hline
\lambda_{1} & 1 & 1&1&1&1&1&1\\   
\lambda_{2}  & 1 & 1&1&1&1&-1&-1\\ 
\lambda_{3} & 1 & 1&1&-1&-1&1&-1\\
 \lambda_{4} & 1 & 1&1&-1&-1&-1&1\\
\lambda_{5} & 2 & 2&-2&0&0&0&0\\\hline
\lambda_{6}& 2 & -2&0&\sqrt{2}&-\sqrt{2}&0&0\\
\lambda_{7}& 2 & -2&0&-\sqrt{2}&\sqrt{2}&0&0\\
  \end{array}
\]

The relevant inclusions among stabilizers are the following. We give a conjugacy representative appearing in the corresponding character table when necessary.
\vspace{0.5cm}

$\begin{array}{ll}

stab(e_2)&\xrightarrow{i} stab(v_3)\\
\langle g_6,g_{10}\rangle&\rightarrow\langle g_6,g_7\rangle\\
g_6&\mapsto g_6\\
g_{10}&\mapsto g_7^{-1}g_6g_7\\
\end{array}$
$\begin{array}{ll}
stab(e_3)&\xrightarrow{i} stab(v_5)\\
\langle g_6,g_{5}\rangle&\rightarrow\langle g_5,g_9\rangle\\
 g_6&\mapsto g_9^{-1}g_5g_9\\
 g_5&\mapsto g_5\\
\end{array}$

\vspace{0.5cm}
$\begin{array}{ll}
stab(e_4)&\xrightarrow{i} stab(v_3)\\
\langle g_2\rangle&\rightarrow\langle g_6,g_7\rangle\\
g_2&\mapsto g_6g_7^2g_6g_7^{-1}\\
\end{array}$
$\begin{array}{ll}
stab(e_5)&\xrightarrow{i} stab(v_4)\\
\langle g_5\rangle&\rightarrow q_2^{-1}\cdot\langle g_6,g_7\rangle\cdot q_2^{-1}\\
g_5&\mapsto q_2^{-1}\cdot g_8\cdot q_2^{-1}\\
\end{array}$

\vspace{0.5cm}
$\begin{array}{ll}
stab(e_6)&\xrightarrow{i} stab(v_3)\\
\langle g_6,g_{11}\rangle&\rightarrow \langle g_6,g_7\rangle\\
g_6&\mapsto g_6\\
g_{11}&\mapsto g_7g_6g_7^{-1}g_6g_7\sim g_6\\
\end{array}$
$\begin{array}{ll}
stab(e_6)&\xrightarrow{i} stab(v_4)\\
\langle g_6,g_{11}\rangle&\rightarrow \langle g_6,g_8\rangle\\
g_6&\mapsto g_6=x\\
g_{11}&\mapsto (g_6g_8)^2=a^2\\
\end{array}$

$\begin{array}{ll}
stab(e_7)&\xrightarrow{i} stab(v_3)\\
\langle g_6,g_{12}\rangle&\rightarrow q_1^{-1}\cdot\langle g_6,g_7\rangle\cdot q_1\\
g_6(=x)&\mapsto q_1^{-1}\cdot(g_6g_7^2g_6)\cdot q_1\sim g_7^2\\
g_{12}(=ax)&\mapsto q_1^{-1}\cdot g_6\cdot q_1\sim g_6\\
\end{array}$
$\begin{array}{ll}
stab(e_7)&\xrightarrow{i} stab(v_5)\\
\langle g_6,g_{12}\rangle&\rightarrow \langle g_5,g_9\rangle\\
g_6&\mapsto g_9^{-1}g_5g_9\\
g_{12}&\mapsto g_9^2
\end{array}$
\vspace{0.5cm}

$\begin{array}{ll}
stab(e_8)&\xrightarrow{i} stab(v_4)\\
\langle g_5,g_{13}\rangle&\rightarrow q_2^{-1}\cdot\langle g_6,g_8\rangle\cdot q_2\\
g_5&\mapsto q_2^{-1}\cdot g_8\cdot q_2\\
g_{13}&\mapsto q_2^{-1}\cdot g_6\cdot q_2\\
\end{array}$
$\begin{array}{ll}
stab(e_8)&\xrightarrow{i} stab(v_5)\\
\langle g_5,g_{13}\rangle&\rightarrow\langle g_5,g_9\rangle\\
g_5&\mapsto g_5\\
g_{13}&\mapsto g_5g_9^2g_5\sim g_9^2\\
\end{array}$

\vspace{1cm}

Using the above inclusions and  elementary calculations with characters, particularly  the  rectification  procedure, Theorem 1.7 in \cite{BAVE2013},  we obtain a matrix of size $34\times33$ representing the morphism $\Phi_1$. The matrices representing the restrictions among stabilizers are the following. The signs corresponding to the coboundary map as in \cite{Sanchez-Garcia(2006SL)}.

$$\begin{array}{|c|cccc|ccc|ccc|}
\hline& & e_1& & & &e_2 && & e_3 &\\\hline
   &-1&0&0&0&-1&0&0&-1&0&0\\
   &0&-1&0&0&0&-1&0&0&-1&0\\
v_1&-1&-1&0&0&0&0&-1&0&0&-1\\
   &-1&0&-1&-1&-1&0&-1&-1&0&-1\\
   &0&-1&-1&-1&0&-1&-1&0&-1&-1\\\hline
\end{array}$$
 
$$\begin{array}{|c|cccc|cc|cc|}
\hline& & e_1& &  &e_4 & & e_5 &\\\hline
   &1&0&0&0&-1&0&-1&0\\
   &0&1&0&0&0&-1&0&-1\\
v_2&0&0&1&0&0&-1&-1&0\\
   &0&0&0&1&-1&0&0&-1\\
   &0&0&1&1&-1&-1&-1&-1\\
   &1&1&0&0&-1&-1&-1&-1\\\hline
\end{array}$$

$$\begin{array}{|c|ccc|cc|c|cc|}
\hline
&&  e_2&&e_4 &&e_6&e_7&\\\hline
   &0&0&1&1&1&-1&-1&0\\
  v_3 &0&0&1&1&1&-1&0&-1\\
   &1&1&1&2&2&-2&-1&-1\\
   \hline
\end{array}$$

$$\begin{array}{|c|cc|c|cc|}
\hline
&  e_5&&   e_6 &e_8&\\\hline
   
v_4 &1&1&1&-1&0\\
    &1&1&1&0&-1\\
   \hline
\end{array}$$

$$\begin{array}{|c|ccc|cc|cc|}
\hline
&&  e_3&&e_7 &&e_8&\\\hline

   &0&0&1&1&0&1&0\\
v_5&0&0&1&0&1&0&1\\
   &1&1&1&1&1&1&1\\
   \hline
\end{array}$$

The elementary divisors of the matrix representing 
the morphism $\phi$ is 1 repeated 12 times. The rank of this matrix is 12.

\subsubsection*{Determination of $\Phi_2$.}

The relevant inclusions among stabilizers are the following. We give a conjugacy representative appearing in the corresponding character table when necessary.

$\begin{array}{ll}
stab(t_3)&\xrightarrow{i}stab(e_6)\\
\langle g_{12},g_{14}\rangle&\rightarrow q_1^{-1}\cdot\langle g_6,g_7\rangle\cdot q_1\\
g_{12}&\mapsto q_1^{-1}\cdot g_6\cdot q_1\\
g_{14}&\mapsto q_1^{-1}\cdot g_{11}\cdot q_1\\
\end{array}$
$\begin{array}{ll}
stab(t_3)&\xrightarrow{i}stab(e_7)\\
\langle g_{12},g_{14}\rangle&\rightarrow \langle g_6,g_{12}\rangle\\
g_{12}&\mapsto g_{12}=ax\\
g_{14}&\mapsto g_{12}(g_6g_{12})^2=xa\\
\end{array}$

\vspace{0.5cm}
$\begin{array}{ll}
stab(t_3)&\xrightarrow{i}stab(e_8)\\
\langle g_{12},g_{14}\rangle&\rightarrow \langle g_5,g_{13}\rangle\\
g_{12}&\mapsto g_{13}(g_5g_{13})^2=xa\\
g_{14}&\mapsto (g_5g_{13})^2=a^2\\
\end{array}$

\vspace{0.5cm}
$\begin{array}{ll}
stab(t_4)&\xrightarrow{i}stab(e_8)\\
\langle g_5\rangle&\rightarrow \langle g_5,g_{13}\rangle\\
g_5&\mapsto g_5=x
\end{array} $

\vspace{0.5cm}
$\begin{array}{ll}
stab(t_5)&\xrightarrow{i}stab(e_6)\\
\langle g_6\rangle&\rightarrow (q_1q_2)^{-1}\cdot\langle g_6,g_{12}\rangle\cdot(q_1q_2)\\
g_6&\mapsto (q_1q_2)^{-1}\cdot(g_6g_11)\cdot(q_1q_2)\\
\end{array} $
$\begin{array}{ll}
stab(t_5)&\xrightarrow{i}stab(e_7)\\
\langle g_6\rangle&\rightarrow \langle g_6,g_{12}\rangle\\
g_6\mapsto &g_6\\
\end{array} $
\vspace{0.5cm}

Using the above inclusions, an elementary calculation yields a matrix of size  $33\times12$ representing the morphism $\Phi_2$. The matrices representing the restrictions among stabilizers  are the following.

$\begin{array}{|c|cc|cc|}
\hline
&t_1& &t_4&  \\\hline
   &1&0&1&0\\
e_1&0&1&0&1\\
   &0&1&1&0\\
   &1&0&0&1\\
\hline
\end{array}$
\hspace{1cm}
$\begin{array}{|c|cc|cc|}
\hline
&t_1& &t_5&  \\\hline
   &-1&0&1&0\\
e_2&0&-1&0&1\\
   &-1&-1&1&1\\
   \hline
\end{array}$
\hspace{1cm}
$\begin{array}{|c|cc|cc|}
\hline
&t_4& &t_5 & \\\hline
   &-1&0&-1&0\\
e_3&0&-1&0&-1\\
   &-1&-1&-1&-1\\
   \hline
\end{array}$
$$\begin{array}{|c|cc|c|}
\hline
&t_1& &t_2  \\\hline
e_4 &1&0&1\\
    &0&1&1\\
   \hline
\end{array}
\hspace{1cm}
\begin{array}{|c|c|cc|}
\hline
&t_2& t_4&  \\\hline
e_5&-1&1&0\\
   &-1&0&1\\
   \hline
\end{array}$$

$$\begin{array}{|c|c|c|cc|cc|}
\hline
&t_2& t_3&t_4&&t_5&  \\\hline

e_6 &2&1&0&0&0&0\\
   \hline
\end{array}$$

$$\begin{array}{|c|c|cc|}
\hline
&t_3&t_5&  \\\hline

e_7&-1&1&1\\
   &-1&1&1\\
   \hline
\end{array}
\hspace{1cm}
\begin{array}{|c|c|cc|}
\hline
&t_3&t_4&  \\\hline
  e_8&1&1&1\\
     &1&1&1\\
   \hline
\end{array}$$

The elementary divisors of the matrix representing 
the morphism $\phi$ is 1,repeated 7 times. The rank of this matrix is 7.

\subsubsection*{Determination of $\Phi_3$.}

The morphism $\Phi_3$  is  given by   blocks which are represented by the following matrices

$$\begin{array}{|l|l|}
\hline
&T_1 \\\hline
t_1&-1\\
   &-1\\\hline
   \end{array}
\hspace{1cm}   
     \begin{array}{|l|l|}
   \hline
&T_1 \\\hline
t_2&1\\\hline\end{array}
\hspace{1cm}
\begin{array}{|l|l|}
\hline
&T_1 \\\hline

  t_3 &-1\\
   &-2\\\hline\end{array}
   \hspace{1cm}
 \begin{array}{|l|l|}
   \hline
&T_1 \\\hline
t_4&1\\
   &1\\\hline\end{array}
   \hspace{1cm}
   \begin{array}{|l|l|}
    \hline
&T_1 \\\hline
t_5&-1\\
   &-1\\
   \hline
\end{array}$$

We have the Bredon  cochain complex

$$0\rightarrow\IZ^{\oplus19}\xrightarrow{\Phi_1^{u_2}}\IZ^{\oplus19}
\xrightarrow{\Phi_2^{u_2}}\IZ^{\oplus8}\xrightarrow{\Phi_3^{u_2}}\IZ\rightarrow 0.$$

Using the information concerning ranks  and  elementary divisors of  $\Phi_i^{u_2}$, we obtain

\begin{equation}\label{equationktheory}
H^p_{\sldrei}(\underbar{E}\sldrei,\calr_{u_2})= 0\text{, if } p>0,\quad  
H^0_{\sldrei}(\underbar{E}\sldrei,\calr_{u_2})\cong\IZ^{\oplus7}.
\end{equation}

Since the Bredon cohomology concentrates at low degree, the spectral sequence described in section \ref{spectral} collapses at level 2 and  we conclude
\begin{theorem}\label{theoremtwistedktheoryu2}
$${ }^{u_2}K^0_{\sldrei}(\underbar{E}\sldrei)\cong\IZ^{\oplus7},$$

$${ }^{u_2}K^1_{\sldrei}(\underbar{E}\sldrei)= 0.$$
\end{theorem}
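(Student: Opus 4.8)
The plan is to run the Atiyah--Hirzebruch-type spectral sequence of Theorem \ref{spectral} for the twisting $u_2$, whose $E_2$-page is the $u_2$-twisted Bredon cohomology of $\underbar{E}\sldrei$ concentrated in the even rows and converging to ${}^{u_2}K^*_{\sldrei}(\underbar{E}\sldrei)$. The whole problem therefore reduces to computing the cohomology of the Bredon cochain complex of $\underbar{E}\sldrei$ with coefficients in the functor $\calr_{u_2}$. First I would assemble that complex from the equivariant cell structure recalled above, each cochain group being the direct sum, over orbit representatives of cells, of the $u_2$-twisted representation rings of the corresponding stabilizers. Since $u_2$ restricts nontrivially only at the two $S_4$-vertices $v_3,v_5$, the spin part of the Schur-cover representation ring enters there, while the remaining stabilizers contribute ordinary representation rings; this yields the complex $0\to\IZ^{\oplus19}\xrightarrow{\Phi_1^{u_2}}\IZ^{\oplus19}\xrightarrow{\Phi_2^{u_2}}\IZ^{\oplus8}\xrightarrow{\Phi_3^{u_2}}\IZ\to0$.

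Next I would compute the three coboundary matrices explicitly, as carried out above: using the projective character tables of $S_4^*$, $D_8$ and $D_2^*$ together with the listed inclusions of stabilizers, each restriction map is read off on characters, and the signs of the cellular boundary are inserted following \cite{Sanchez-Garcia(2006SL)}. The decisive numerical step is then to put these integer matrices into Smith normal form: one finds ranks $12$, $7$ and $1$, with every nonzero elementary divisor equal to $1$. From $\rk\Phi_1^{u_2}=12$ one reads off $H^0\cong\IZ^{\oplus7}$. The kernel of $\Phi_2^{u_2}$ has rank $19-7=12=\rk\Phi_1^{u_2}$ and the kernel of $\Phi_3^{u_2}$ has rank $8-1=7=\rk\Phi_2^{u_2}$; since all nonzero elementary divisors equal $1$, each image is a direct summand equal to the succeeding kernel and $\Phi_3^{u_2}$ is surjective, whence $H^1=H^2=H^3=0$. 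This is precisely equation \eqref{equationktheory}: the cohomology is concentrated in degree $0$ with value $\IZ^{\oplus7}$.

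Finally I would feed this back into the spectral sequence. With the $E_2$-page supported on the single column $p=0$ (the even rows carrying $\IZ^{\oplus7}$, the odd rows zero), every differential $d_r$ has either a zero source or a zero target, so the sequence degenerates at $E_2=E_\infty$; no extension problem arises, since the only surviving group in any total degree is free abelian. Collecting by total degree, $K^0$ receives exactly $E_\infty^{0,0}\cong\IZ^{\oplus7}$ and no class survives in odd total degree, giving ${}^{u_2}K^0_{\sldrei}(\underbar{E}\sldrei)\cong\IZ^{\oplus7}$ and ${}^{u_2}K^1_{\sldrei}(\underbar{E}\sldrei)=0$.

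The genuinely delicate step is not the homological bookkeeping but the character computation of the restriction maps between the twisted representation rings: the main obstacle is to match the generators of the projective representation rings at different stabilizers consistently along each embedding, which requires a careful application of the rectification procedure (Theorem 1.7 of \cite{BAVE2013}) before the elementary divisors can be extracted reliably. Once those matrices are correct, the collapse of the spectral sequence and the vanishing of $K^1$ are immediate.
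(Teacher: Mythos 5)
Your proposal is correct and follows essentially the same route as the paper: the identical Bredon cochain complex $0\to\IZ^{\oplus19}\xrightarrow{\Phi_1^{u_2}}\IZ^{\oplus19}\xrightarrow{\Phi_2^{u_2}}\IZ^{\oplus8}\xrightarrow{\Phi_3^{u_2}}\IZ\to0$ assembled from the same stabilizer inclusions and projective character tables of $S_4^*$, $D_8$ and $D_2^*$ (with rectification via Theorem 1.7 of \cite{BAVE2013}), the same ranks and unit elementary divisors giving equation \eqref{equationktheory}, and the same collapse at $E_2$ of the spectral sequence of Theorem \ref{spectral}. Your only addition is spelling out explicitly why the Smith normal form data force $H^1=H^2=H^3=0$ and why no extension problem arises, which the paper leaves implicit.
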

\subsection*{The   twisting  $u_1+u_2$}
Now we continue with the calculation of ${ }^{u_1+u_2}K_{\sldrei}(\underbar{E}\sldrei)$. Notice that the classes $u_1$ and $u_2$ are disjoint, i.e they do not restrict simultaneously  to  a non-zero element in the  cohomology  of any subgroup of
$\sldrei$. This  observation and  diagram \ref{diagram} lead to the following.

\begin{remark}\label{u1u21}The matrix $\Phi_1^{u_1+u_2}$ corresponding to the twisting $u_1+u_2$ can be obtained as:

$$\begin{array}{|c|c|c|c|c|c|c|c|c|}\hline
 &e_1&e_2&e_3&e_4&e_5&e_6&e_7&e_8\\\hline
 v_1& u_1&u_1&u_1&0&0&0&0&0\\\hline
 v_2&u_1&0&0&u_1&u_1&0&0&0\\\hline
 v_3&0&u_2&0&u_2&0&u_2&u_2&0\\\hline
 v_4&0&0&0&0&u_2&u_2&0&u_2\\\hline
 v_5&0&0&u_2&0&0&0&u_2&u_2\\\hline
 \end{array}$$\\
 
 Where a $u_i$ in position $(j,k)$ means that we take the corresponding submatrix of $\Phi_1^{u_i}$ associated to the inclusion $stab(e_j)\rightarrow stab(v_k)$. 
\end{remark}

This matrix has size $14\times16$ and it has elementary divisors $(1,1,1,1,1,1,1,1,2)$  and its rank is 9.
\begin{remark}\label{u1u22}The matrix $\Phi_2^{u_1+u_2}$ corresponding to the twisting $u_1+u_2$ can be obtained as:

$$\begin{array}{|c|c|c|c|c|c|}\hline
 &t_1&t_2&t_3&t_4&t_5\\\hline
 e_1& u_1&0&0&u_1&0\\\hline
 e_2&u_0&0&0&0&u_0\\\hline
 e_3&0&0&0&u_0&u_0.\\\hline
 e_4&u_0&u_0&0&0&0\\\hline
 e_5&0&u_0&0&u_0&u_0\\\hline
 e_6&0&u_2&u_2&0&u_2\\\hline
 e_7&0&0&u_2&0&u_2\\\hline
 e_8&0&0&u_2&u_2&0\\\hline
 \end{array}$$\\
\end{remark}
Where an  $u_i$ in position $(j,k)$ means that we take the corresponding submatrix of $\Phi_1^{u_i}$ associated to the inclusion $stab(t_j)\rightarrow stab(e_k)$ ($u_0$ denotes the trivial cocycle). 

This matrix has size $16\times8$ and it has 1 as elementary divisor 7 times and its rank is 7.

Finally the matrix $\Phi_3^{u_1+u_2}$ corresponding to the twisting $u_1+u_2$ is the same as the matrix $\Phi_3^{u_2}$.

We have the following cochain complex

$$0\rightarrow\IZ^{\oplus14}\xrightarrow{\Phi_1^{u_1+u_2}}\IZ^{\oplus16}
\xrightarrow{\Phi_2^{u_1+u_2}}\IZ^{\oplus8}\xrightarrow{\Phi_3^{u_1+u_2}}
\IZ\rightarrow 0$$

Using the  data  of $\Phi_i^{u_1+u_2}$ concerning ranks and elementary divisors we obtain

\begin{align}\label{equationktheoryu2}
H^p_{\sldrei}(\underbar{E}\sldrei,\calr_{u_1+u_2})&= 0\text{, if } p>1,\\  
H^0_{\sldrei}(\underbar{E}\sldrei,\calr_{u_1+u_2})&\cong\IZ^{\oplus5}\\
H^1_{\sldrei}(\underbar{E}\sldrei,\calr_{u_1+u_2})&\cong\IZ/2\IZ.
\end{align}

Since the Bredon cohomology concentrates at low degree, the spectral sequence described in section \ref{spectral} collapses at level 2 and  we conclude
\begin{theorem}\label{theoremtwistedktheory}
$${ }^{u_1+u_2}K^0_{\sldrei}(\underbar{E}\sldrei)\cong\IZ^{\oplus5},$$

$${ }^{u_1+u_2}K^1_{\sldrei}(\underbar{E}\sldrei)= \IZ/2\IZ.$$
\end{theorem}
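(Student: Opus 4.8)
The plan is to obtain both $K$-groups as the abutment of the spectral sequence of Theorem \ref{spectral}, exactly as was done for the twistings $u_1$ and $u_2$ in Theorems \ref{theoremtwistedktheoryu1} and \ref{theoremtwistedktheoryu2}. First I would observe that $u_1+u_2$ is represented by a normalized torsion cocycle of order $2$ and that $\eub{\sldrei}$ is a finite proper $\sldrei$-CW complex with the cell structure tabulated above, so the hypotheses of Theorem \ref{spectral} are met and the $E_2$-page is the $(u_1+u_2)$-twisted Bredon cohomology $H^*_{\sldrei}(\eub{\sldrei},\calr_{u_1+u_2})$, placed in even rows. The genuine content is therefore the determination of this cohomology. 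Here I would exploit the disjointness of $u_1$ and $u_2$ recorded after diagram \ref{diagram}: since the two classes never restrict simultaneously nontrivially to a common finite subgroup, the restriction maps $\Phi_i^{u_1+u_2}$ assemble blockwise out of the already-known blocks of $\Phi_i^{u_1}$ and $\Phi_i^{u_2}$, as in Remarks \ref{u1u21} and \ref{u1u22}.

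With the integral cochain complex
$$0\rightarrow\IZ^{\oplus14}\xrightarrow{\Phi_1^{u_1+u_2}}\IZ^{\oplus16}\xrightarrow{\Phi_2^{u_1+u_2}}\IZ^{\oplus8}\xrightarrow{\Phi_3^{u_1+u_2}}\IZ\rightarrow 0$$
in hand, the next step is to reduce each $\Phi_i^{u_1+u_2}$ to Smith normal form and read off ranks and elementary divisors. The rank-$9$ matrix $\Phi_1^{u_1+u_2}$ gives $H^0=\ker\Phi_1^{u_1+u_2}\cong\IZ^{\oplus5}$, while its single elementary divisor $2$ is precisely what produces the torsion subgroup $\IZ/2\IZ$ in $H^1=\ker\Phi_2^{u_1+u_2}/\im\Phi_1^{u_1+u_2}$ (which is purely torsion, since the free ranks of source and image there coincide); that $\Phi_2^{u_1+u_2}$ has all elementary divisors equal to $1$ ensures no further torsion and that $H^2=0$, and surjectivity of $\Phi_3^{u_1+u_2}$ (identical to $\Phi_3^{u_2}$) kills $H^3$. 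This reproduces the computation recorded in \ref{equationktheoryu2}, namely $H^0\cong\IZ^{\oplus5}$, $H^1\cong\IZ/2\IZ$, and $H^p=0$ for $p>1$.

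Finally I would feed this into the spectral sequence. Because the $E_2$-page is concentrated in the two columns $p=0,1$, every differential $d_r$ with $r\geq 2$ lands in a column $p+r\geq 2$ where $E_2$ vanishes, so the sequence collapses at $E_2$. Reading off the abutment by total parity, $K^0$ receives only the contribution $E_\infty^{0,0}=H^0\cong\IZ^{\oplus5}$ and $K^1$ only $E_\infty^{1,0}=H^1\cong\IZ/2\IZ$; since each total degree receives exactly one nonzero subquotient, there is no extension problem to resolve, giving the stated groups. I expect the main obstacle to be entirely computational, namely the correct blockwise assembly of $\Phi_1^{u_1+u_2}$ and the careful bookkeeping of its elementary divisors: it is exactly the lone elementary divisor $2$ that distinguishes this twisting from $u_2$ and manufactures the nontrivial class in $K^1$, whereas the collapse and the absence of extensions are then formal.
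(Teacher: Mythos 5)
Your proposal is correct and follows essentially the same route as the paper: blockwise assembly of the matrices $\Phi_i^{u_1+u_2}$ from the $u_1$ and $u_2$ blocks via the disjointness observation, Smith-normal-form bookkeeping yielding $H^0\cong\IZ^{\oplus 5}$, $H^1\cong\IZ/2\IZ$, $H^p=0$ for $p>1$, and collapse of the spectral sequence of Theorem \ref{spectral} at $E_2$ with no extension problems. Your explicit justification of the collapse and of the torsion in $H^1$ (equal free ranks plus the lone elementary divisor $2$) merely spells out details the paper leaves implicit.
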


\section{Applications}\label{sectionapplications}
\subsection*{Twisted  equivariant  $K$-Homology and  the Baum-Connes  Conjecture}

The  Baum-Connes  Conjecture  \cite{baumconnes}, \cite{valettemislin}  predicts  for  a   discrete   group $G$ the  existence  of  an  isomorphism 
$$\mu_{i}: K^G_i(\eub{G})\to K_i(C_{r}^*(G))$$
 given  by  the  (analytical) assembly  map,  where  $C_r^ *(G)$  is  the   reduced  $C^*$-algebra of  the  group  $G$.
 
More  generally,  given any  $G$-$C^*$-Algebra,  the  Baum-Connes   conjecture  with  coefficients   predicts an isomorphism   given by an assembly  map   
$$\mu_{i}: K^G_i(\eub{G}, A)\to K_i( A\rtimes G )).$$
Where $ K^G_i(\eub{G}, A)$ is  defined  in  terms  of  equivariant  and  bivariant  $KK$-groups ,

$$K_*^G(\eub{G},A)= \underset{{\rm G-compact}X\subset \eub{G}}{\colim} KK_{*}(C_{0}(X), A)$$

 and  $  A\rtimes G$ denotes  the  crossed  product $C^ *$-algebra, $X\subset \eub{G}$  is  a cocompact  subcomplex. 
See \cite{echterhoffchabert},  \cite{echterhoff}  for  more  details.

\begin{definition}
Let $G$  be  a  discrete  group. Given a  cocycle  $\omega \in Z^2(G,S^1)$,  an  $\omega$-representation  on a  Hilbert  space  $\HH$  is  a   map  $V: G\to  \UU(\HH)$ 
satisfying $V(s)V(t)= \omega(s,t)V(st)$.
\end{definition}
 
Consider the  quotient  map $\UU(\HH)\to  PU(\HH)= \UU(\HH)/S^1$.  Recall that  the group $PU(\HH)$ is the outer automorphism group $Out(\mathcal{K})$ of  the  $C^*$-algebra  of  compact  operators on $\HH$, denoted  by   $\KK$. 
 The  cocycle $\omega$ defines  in this  way an  action  of  $G$ on  $\KK$.     This  algebra   is  denoted by  $\KK_\omega$.

Let  $G$ be  a  discrete group  with a  finite  model  for  $\eub{G}$. Let  $\omega\in Z^2(G, S^1)$  be  a  cocycle  and  assume  that  the  bredon  cohomology  groups  $H^*(\eub{G},{\mathcal{R}}^{-\omega})$ are   concentrated  in  degrees  0 and 1.

Then,  the  Universal  Coefficient  Theorem  for  Bredon  cohomology, Theorem  1.13  in \cite{BAVE2013} identifies   the  Bredon  homology  groups  $H_*^{\sldrei}(\eub{\sldrei}, \mathcal{R}^\alpha)$  with  the  Bredon  cohomology  groups  $H^*_{\sldrei}(\eub{\sldrei}, \mathcal{R}^\alpha)$. 
By inspecting  the  Bredon  cohomology groups computed  in \ref{equationktheoryu2},  \ref{equationktheory},   the  hypotheses  of  Corollary  7.3  in \cite{BAVE2013} are  satisfied for  the  twistings  $u_2$ and $u_1+u_2$. This  gives  a  duality  isomorphism 
$${}^\omega K_G^*(\eub{G})\to K^G_*(\eub{G}, \mathcal{K}_{-\omega}).$$

Similar  forms  of  Poincar\'e duality for  proper and  twisted actions  have  been  studied  by  Echterhoff, Emerson  and Kim  in  \cite{echterhoffemersonkim} (Theorem 3.1)  under assumptions  concerning  the  Baum-Connes  conjecture, particularly the  validity  of  the  Dirac-Dual-Dirac  Method for the  group $G$.

\begin{theorem}\label{theoremkhomology}
The  equivariant  $K$-homology  groups with  coefficients  in the $G$-$C^*$ algebra $\mathcal{K}_\omega$  are  given  as  follows: 
\begin{itemize}
\item $$K_0^{\sldrei}(\underbar{E}\sldrei, \mathcal{K}_{u_1})\cong\IZ^{\oplus13},$$

$$K_1^{\sldrei}(\underbar{E}\sldrei, \mathcal{K}_{u_1})= 0.$$
\item $$K_0^{\sldrei}(\underbar{E}\sldrei, \mathcal{K}_{u_2})\cong\IZ^{\oplus7},$$

$$K_1^{\sldrei}(\underbar{E}\sldrei, \mathcal{K}_{u_2})= 0.$$
\item $$K_0^{\sldrei}(\underbar{E}\sldrei, \mathcal{K}_{u_1+u_2})\cong\IZ^{\oplus5},$$

$$K_1^{\sldrei}(\underbar{E}\sldrei, \mathcal{K}_{u_1+u_2})\cong \IZ/2\IZ.$$
\end{itemize}
\end{theorem}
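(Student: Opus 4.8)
The plan is to obtain each of the three computations by transporting the twisted equivariant $K$-theory groups of Theorems \ref{theoremtwistedktheoryu1}, \ref{theoremtwistedktheoryu2} and \ref{theoremtwistedktheory} across the duality isomorphism recorded immediately before the statement, and then identifying the resulting coefficient algebra $\mathcal{K}_{-\omega}$ with $\mathcal{K}_{\omega}$. Throughout, $\omega$ ranges over the three nontrivial twistings $u_1$, $u_2$ and $u_1+u_2$, each of which is a $2$-torsion class in $H^3(\sldrei,\IZ)$; this torsion property is the mechanism that makes the inverse twist $-\omega$ interchangeable with $\omega$, both at the level of the coefficient systems $\calr^{\pm\omega}$ and at the level of the $\sldrei$-$C^*$-algebras $\mathcal{K}_{\pm\omega}$.

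First I would verify the hypothesis required by Corollary 7.3 of \cite{BAVE2013}, that the Bredon cohomology groups $H^*_{\sldrei}(\eub{\sldrei},\calr^{-\omega})$ are concentrated in degrees $0$ and $1$. Since $\omega$ is $2$-torsion, $-\omega$ is cohomologous to $\omega$ and $\calr^{-\omega}\cong\calr^{\omega}$, so it suffices to read off this concentration from the already-computed groups: the case $u_1$ is treated in \cite{BAVE2013}, while for $u_2$ and $u_1+u_2$ the explicit Bredon cochain complexes together with the rank and elementary-divisor data of equations \ref{equationktheory} and \ref{equationktheoryu2} display vanishing above degree $1$. With this in place, the Universal Coefficient Theorem for Bredon cohomology (Theorem 1.13 of \cite{BAVE2013}) identifies $H_*^{\sldrei}(\eub{\sldrei},\calr^{\omega})$ with $H^*_{\sldrei}(\eub{\sldrei},\calr^{\omega})$, and the duality isomorphism of Corollary 7.3,
$${}^\omega K^*_{\sldrei}(\eub{\sldrei})\xrightarrow{\ \cong\ }K_*^{\sldrei}(\eub{\sldrei},\mathcal{K}_{-\omega}),$$
which preserves the degree, transfers the three computed values onto $K_*^{\sldrei}(\eub{\sldrei},\mathcal{K}_{-\omega})$.

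It then remains to replace $\mathcal{K}_{-\omega}$ by $\mathcal{K}_{\omega}$. Because $-\omega$ and $\omega$ are cohomologous, the two $\sldrei$-actions on the compact operators $\KK$ determined by the associated homomorphisms $\sldrei\to PU(\HH)=Out(\KK)$ are equivalent, so $\mathcal{K}_{-\omega}\cong\mathcal{K}_{\omega}$ as $\sldrei$-$C^*$-algebras and the induced map on equivariant $K$-homology is an isomorphism; composing gives the stated groups. The hard part will be bookkeeping rather than conceptual: one must confirm that the hypotheses of Corollary 7.3 (finiteness of the chosen model for $\eub{\sldrei}$, and the degree conventions matching $K^i$ to $K_i$) hold uniformly across the three twistings, and---more delicately---that the identification $\mathcal{K}_{-\omega}\cong\mathcal{K}_{\omega}$ is natural enough to be compatible with the duality map, so that the final answer is genuinely the $K$-homology with coefficients in $\mathcal{K}_{\omega}$ as written and not merely an abstractly isomorphic group.
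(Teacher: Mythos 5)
Your proposal is correct and follows essentially the same route as the paper: verify from the computations in \ref{equationktheory} and \ref{equationktheoryu2} (and from \cite{BAVE2013} for $u_1$) that the Bredon cohomology is concentrated in degrees $0$ and $1$, invoke the Universal Coefficient Theorem (Theorem 1.13 of \cite{BAVE2013}) and the duality isomorphism ${}^\omega K^*_{\sldrei}(\eub{\sldrei})\to K_*^{\sldrei}(\eub{\sldrei},\mathcal{K}_{-\omega})$ of Corollary 7.3, and transport the groups from Theorems \ref{theoremtwistedktheoryu1}, \ref{theoremtwistedktheoryu2} and \ref{theoremtwistedktheory}. Your extra care with the sign of the twist, which the paper passes over silently, is in fact immediate: since $2u_1=2u_2=0$ in $H^3(\sldrei,\IZ)$, each class satisfies $-\omega=\omega$ on the nose, so $\mathcal{K}_{-\omega}$ and $\mathcal{K}_{\omega}$ coincide up to the exterior equivalence of actions you describe.
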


\subsection*{Relation  to  the work  of   Tezuka and Yagita}


In the  case  of  finite  order  twists given  by  cocycles $\alpha \in Z^2(G, S^1)$, the  finite dimensional, $\alpha$-twisted vector  bundle model  of  twisted equivariant $K$-theory  is  related  to  untwisted equivariant  $K$-theory  groups  in a  way  we  will  describe  below.

Recall  that  given a normalized   torsion cocycle $\alpha$,  there  exists  a  central  extension 
$$1\to  \IZ/n \to {G}_\alpha \to G\to 1$$

Let  $X$ be  a  $G$-connected  G-CW  complex.  The  $\alpha$-Twisted  $K$-theory  groups  are  seen   to  agree  with  the  abelian group  of  ${G}_\alpha$-equivariant,  complex  vector  bundles for  which the  generator  of  $\IZ/n$  acts  by  complex  multiplication  by  $e^{2\pi i n}$.  There  is  a  splitting 
\begin{equation} \label{splitting}
\ktheory{}{{G}_\alpha}{0}{X}\cong \underset{V\in {\rm Irr}(\IZ/n)}{\bigoplus}\ktheory{}{{G}_\alpha}{0}{X, V},
\end{equation}

where $\ktheory{}{{G}_\alpha}{0}{X, V}$ is  the  subgroup  of ${G}_\alpha$-equivariant, complex  vector  bundles for  which  the  action  of  the  central $\IZ/n$  on each fiber restricts  to  the   irreducible representation $V$, and  the definition  is  extended  to  other  degrees  using the  remarks following  definition \ref{definitionktheory}. 



Given a  discrete  group $G$ and a  normalized  torsion cocycle $\alpha$, Theorem 3.4  in \cite{dwyer2008}  proves  that  the  groups $\ktheory{\alpha}{G}{*}{X}$ extend  to a  $\IZ/2$-graded  equivariant  cohomology  theory on the  category  of  finite,   proper  $G$-CW  pairs. This theory   restricts  to  equivariant  K-theory \cite{lueckolivercompletion} in the  case  of  a  trivial  cocycle. The  groups  $\ktheory{\alpha}{G}{*}{X}$ have a  natural  graded $\ktheory{}{G}{*}{X}$-module  structure.

The  multiplicative  structure  on the  graded  ring $\ktheory{}{G}{*}{\eub{G}}$ is  well  known. Recall  the definition  of the  augmentation  ideal
$$I_{G}= \ker(\ktheory{}{G}{0}{\eub{G}}\overset{i^*}{\to} \ktheory{}{G}{0}{\eub{G}_0} \to \ktheory{}{\{e\}}{0}{\eub{G}_0} , $$
where  $\eub{G}_0 \to \eub{G}$  denotes  the  inclusion  of  the $0th$-skeleton  and  the  last  map  is   the  restriction map  associated  to the trivial  group $\{ e\}\subset G$. 

The following  result  is a  generalization  of the  Atiyah-Segal completion Theorem  and it  is  proved in  part  b)  of   4.4 in  \cite{lueckolivercompletion}, page  611.

\begin{theorem}\label{theorem atiyahsegal}
Let  $\eub{G}$  be  the  classifying  space  for  proper  actions. 
\begin{itemize}
\item if  $\eub{G}$  has  the  homotopy  type  of  a  finite  $G$-CW complex, then there  is an isomorphism 
$$K^*(B G)\cong \ktheory{}{G}{*}{\eub{G}}_{\hat{I}_{G}}  ,$$

where  the  right  hand  side  denotes  the completion with respect  to the  ideal $I_G$. 

\end{itemize}

\end{theorem}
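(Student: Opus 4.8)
The plan is to identify $K^*(BG)$ with the $K$-theory of a Borel construction and then realize the completion map as an instance of an equivariant-to-Borel comparison, proved by induction over the equivariant cells. First I would use the fact recalled in the introduction that the Borel construction $\eub{G}\times_G EG$ is a model for $BG$, so that $K^*(BG)\cong K^*(\eub{G}\times_G EG)$. The projection $\eub{G}\times EG\to \eub{G}$ together with passage to the $G$-quotient produces a natural Borel comparison map $\beta\colon \ktheory{}{G}{*}{\eub{G}}\to K^*(\eub{G}\times_G EG)$. On each finite stage of the skeletal filtration of $EG$ this map kills a power of the augmentation ideal $I_G$ (the standard statement that $I_G$ acts topologically nilpotently on the $K$-theory of the Borel construction), so $\beta$ factors through the $I_G$-adic completion $\ktheory{}{G}{*}{\eub{G}}^{\wedge}_{I_G}$. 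The theorem is precisely the assertion that the induced map on completions is an isomorphism.

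Second, I would prove the sharper statement that for \emph{every} finite proper $G$-CW complex $X$ the comparison map $\ktheory{}{G}{*}{X}^{\wedge}_{I_G}\to K^*(X\times_G EG)$ is an isomorphism, and then specialize to $X=\eub{G}$, which is finite by hypothesis. The argument proceeds by induction on the number of equivariant cells of $X$. The base case is a single orbit $X=G/H$ with $H$ finite: here $X\times_G EG\simeq EG/H\simeq BH$, while $\ktheory{}{G}{0}{G/H}\cong R(H)$, so the statement collapses to the classical Atiyah--Segal completion theorem $R(H)^{\wedge}\cong K^*(BH)$ for the finite group $H$. A point demanding care is that the completion is taken at $I_G$ rather than at the augmentation ideal $I_H$ of $R(H)$; I would invoke the comparison of adic topologies (in the style of Atiyah--Tall) to see that the $I_G$-adic and $I_H$-adic topologies agree on the finitely generated module $R(H)$, so the two completions coincide.

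For the inductive step I would attach an equivariant cell through a cofibration $G/H\times S^{n-1}\hookrightarrow G/H\times D^n$ and compare the resulting Mayer--Vietoris (or cofiber) long exact sequences in equivariant $K$-theory and in the $K$-theory of the Borel construction. The crucial algebraic input is that the representation rings $R(H)$ are Noetherian for finite $H$, so that $I_G$-adic completion is exact on finitely generated modules; this allows me to complete the equivariant long exact sequence term by term and to conclude, via the five lemma, that it matches the already-exact Borel-side sequence. Because $X$ is a finite complex, every module in sight is finitely generated, the induction terminates after finitely many steps, and no $\lim^{1}$-obstruction to passing to the limit survives.

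The hard part I anticipate is controlling the completion quantitatively: establishing that $\beta$ is continuous for the $I_G$-adic topology with uniform estimates (a bound, independent of the stage, on the power of $I_G$ annihilated at each finite skeleton), and reconciling the $I_G$-adic topology with the individual $I_H$-adic topologies governing the orbit contributions. This is exactly where the finiteness of the $G$-CW structure and the Noetherian hypotheses become indispensable, and it is the content that \cite{lueckolivercompletion} (Theorem 4.4(b)) supplies; the reduction sketched above is the route by which I would organize that input into a proof of the stated isomorphism.
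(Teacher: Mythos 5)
You should first be aware that the paper contains no proof of this statement at all: it is quoted directly from part (b) of Theorem 4.4 of \cite{lueckolivercompletion}, so the only fair comparison is with the argument in that reference. Your sketch does reproduce the correct overall architecture of that proof (and of the classical Atiyah--Segal argument): factor the Borel comparison map $\beta$ through the $I_G$-adic completion using nilpotence of augmentation-ideal elements on finite skeleta of $EG$, prove the sharper statement for all finite proper $G$-CW complexes $X$, induct over equivariant cells with the five lemma, and use exactness of $I_G$-adic completion on finitely generated modules --- the latter is legitimate here, since $K_G^*(X)$ for $X$ finite proper is a finitely generated abelian group (finitely many cells, each contributing a copy of some $R(H)$ with $H$ finite), so the base ring $K_G^0(\eub{G})$ is Noetherian.

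The genuine gap is in your base case, and it is precisely the heart of the theorem. You propose to handle the orbit $G/H$ by invoking ``comparison of adic topologies in the style of Atiyah--Tall'' to identify the $I_G$-adic and $I_H$-adic topologies on $R(H)$. In the classical setting that comparison rests on $R(H)$ being a finite module over the representation ring $R(G)$ of the ambient compact group, with $I_H$ lying over $I_G$. For an infinite discrete group there is no $R(G)$; the ideal $I_G$ lives in $K_G^0(\eub{G})$, and the needed statement --- that $I_H^n \subseteq I_G \cdot R(H)$ for some $n$, i.e.\ that the restriction map $K_G^0(\eub{G}) \to R(H)$ hits enough of $I_H$ --- is equivalent to a strong existence statement for $G$-equivariant vector bundles over $\eub{G}$ whose restrictions to finite isotropy groups realize (stably) prescribed representations. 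Establishing this is the main technical content of \cite{lueckolivercompletion} (their construction of ``enough bundles'' and the resulting analysis of the adic topologies), so as written your plan is circular: it defers its pivotal lemma to the very source being proved. A secondary imprecision: your assertion that ``no $\lim^1$-obstruction survives'' is not justified by finiteness of $X$ alone, since the Borel side $K^*(X\times_G EG)$ involves an infinite-dimensional complex; the rigorous route, both in Atiyah--Segal and in Lück--Oliver, is to prove a pro-isomorphism between the systems $\{K_G^*(X)/I_G^n\}$ and $\{K^*((X\times EG^{(n)})/G)\}$, from which the isomorphism of completions and the vanishing of $\lim^1$ follow together.
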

Specializing  to  the  case  of  $\stein$,  the topological $K$-theory  ring  $K^*(B\stein)$  is  known  after  computations  by   Tezuka  and  Yagita  using  Brown Peterson spectra and  its  Conner-Floyd  isomorphism, Corollary 4.7 in page 93  of  page \cite{tezukayagita},  which  we  recall:  
\begin{theorem} \label{tezukayagita}
Localized at  the  prime 2,  the  topological  $K$-theory  of  $B\stein$  is  given as  follows: 
\begin{itemize}
\item $K^0(B\stein)= \IZ_{\hat{2}}^6  \oplus \IZ_{(2)}$
\item $K^1( B\stein)= \IZ_{\hat{2}}$, 
\end{itemize}
where $\IZ_{(2)}$ is  the  localization at  $2$, and $\IZ_{\hat{2}}$ denotes  the  2-adical  completion  of  the  integers. 
\end{theorem}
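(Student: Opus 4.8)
The plan is to derive the stated $2$-local description of $K^*(B\stein)$ by combining the equivariant computations of Section 5 with the Atiyah--Segal completion isomorphism of Theorem \ref{theorem atiyahsegal}. Since $\eub{\stein}=\eub{\sldrei}$ admits a finite $\stein$-CW structure, that theorem identifies $K^*(B\stein)$ with the $I_{\stein}$-adic completion of $\ktheory{}{\stein}{*}{\eub{\stein}}$. The first step is thus to compute the untwisted $\stein$-equivariant $K$-theory of $\eub{\stein}$; the completion is the second, and harder, step.

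To compute $\ktheory{}{\stein}{*}{\eub{\stein}}$ I would use the central extension $1\to\IZ/2\to\stein\to\sldrei\to 1$, whose class is $u_1+u_2$. The central $\IZ/2$ acts on the fibres of a $\stein$-equivariant bundle through its two characters, so the representation-ring coefficient system splits as $\calr\cong\calr_0\oplus\calr_1$, with $\calr_0$ recording the trivial central character and $\calr_1$ the sign character. The $\calr_0$-summand reproduces the untwisted $\sldrei$-equivariant theory of S\'anchez-Garc\'ia \cite{Sanchez-Garcia(2006SL)}, while by Theorem \ref{theoremuntwistingcentral} the $\calr_1$-summand is precisely the $(u_1+u_2)$-twisted theory computed in Theorem \ref{theoremtwistedktheory}. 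Working $2$-locally discards the irrelevant $3$-torsion in $H^*(\stein)$, and the collapsing spectral sequence of Theorem \ref{spectral} then assembles the two summands into
$$\ktheory{}{\stein}{0}{\eub{\stein}}\cong\ktheory{}{\sldrei}{0}{\eub{\sldrei}}\oplus\IZ^{\oplus 5},\qquad \ktheory{}{\stein}{1}{\eub{\stein}}\cong\IZ/2\IZ.$$

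The main obstacle is the completion. Completing a finitely generated $R(\stein)$-module at the augmentation ideal $I_{\stein}$ is not merely base change along $\IZ\to\IZ_{\hat{2}}$: the resulting $\IZ_{\hat{2}}$- and $\IZ_{(2)}$-ranks are controlled by the $I$-adic filtration on $\ktheory{}{\stein}{*}{\eub{\stein}}$, hence by the multiplicative structure over the representation rings of the finite subgroups, and not by the underlying abelian group alone. This is what accounts both for the drop from the free group above to $\IZ_{\hat{2}}^{6}\oplus\IZ_{(2)}$ in even degree and for the emergence of a torsion-free $\IZ_{\hat{2}}$ in odd degree out of the torsion group $\IZ/2\IZ$, the latter reflecting $\lim^{1}$-contributions in the derived completion. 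Pinning these filtration jumps down is exactly the content of Tezuka and Yagita's computation via the Brown--Peterson cohomology of $B\stein$ and its Conner--Floyd isomorphism; I would therefore invoke their Corollary 4.7 \cite{tezukayagita} for the completion itself, using the equivariant calculation above as an independent check on the module underlying $K^*(B\stein)$.
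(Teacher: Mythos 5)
The paper gives no internal proof of this statement: Theorem \ref{tezukayagita} is quoted verbatim from Tezuka and Yagita (Corollary 4.7, p.~93 of \cite{tezukayagita}), where it is established via Brown--Peterson cohomology and the Conner--Floyd isomorphism. Your proposal, after its preparatory splitting $\calr\cong\calr_0\oplus\calr_1$ and the Atiyah--Segal reduction, explicitly rests the entire content of the statement --- the $I_{\stein}$-adic completion --- on that same Corollary 4.7, so in substance you take the same route as the paper, namely citation; your equivariant scaffolding is precisely what the paper's closing corollary deduces \emph{from} this theorem (run in the opposite direction), and it functions here as a consistency check rather than a proof.
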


Putting  toghether   Theorems \ref{theoremtwistedktheory},  \ref{theorem atiyahsegal}, and \ref{tezukayagita} one  obtains: 
\begin{corollary}
The  completion  of  the equivariant  $K$-theory  groups $\ktheory{}{G}{*}{\eub{\stein}}$  computed  in \ref{theoremtwistedktheory} with  respect  to  the  augmentation  ideal $I_{\stein}$ is  given as follows: 

\begin{itemize}
\item $ \ktheory{}{G}{0}{\eub{\stein}}_{\hat{I}_{\stein}} = \IZ_{\hat{2}}^6  \oplus \IZ_{(2)}$
\item $ \ktheory{}{G}{1}{\eub{\stein}}_{\hat{I}_{\stein}}    = \IZ_{\hat{2}}$, 
\end{itemize}
\end{corollary}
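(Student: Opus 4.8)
The plan is to deduce the statement directly from the Atiyah--Segal completion theorem recalled as Theorem \ref{theorem atiyahsegal} together with the Tezuka--Yagita computation of Theorem \ref{tezukayagita}. The first step is to record that Soul\'e's model for $\eub{\sldrei}$ is $\sldrei$-cocompact, hence a finite $\sldrei$-CW complex (the cell tables of Section \ref{sectiontwists} list only finitely many orbit representatives), and that under the central extension $1\to\IZ/2\to\stein\xrightarrow{\rho}\sldrei\to 1$ the same space, with the $\stein$-action obtained by precomposition with $\rho$, is a finite model for $\eub{\stein}$ on which the central $\IZ/2$ acts trivially. Thus the hypothesis of Theorem \ref{theorem atiyahsegal} is satisfied for $G=\stein$, and I obtain
$$K^*(B\stein)\cong \ktheory{}{\stein}{*}{\eub{\stein}}_{\hat I_{\stein}}.$$

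Next I would make the target explicit and connect it to the computed groups. By Theorem \ref{tezukayagita} the localization at $2$ of the left-hand side is $K^0(B\stein)=\IZ_{\hat 2}^{6}\oplus\IZ_{(2)}$ and $K^1(B\stein)=\IZ_{\hat 2}$, and substituting these into the isomorphism above gives exactly the asserted values of $\ktheory{}{\stein}{0}{\eub{\stein}}_{\hat I_{\stein}}$ and $\ktheory{}{\stein}{1}{\eub{\stein}}_{\hat I_{\stein}}$. The finitely generated groups being completed are themselves available from the earlier work: since the extension is by $\IZ/2$ the only nontrivial twist is $u_1+u_2$, so the Corollary following Theorem \ref{theoremuntwistingcentral}, together with the splitting of the untwisted $\stein$-theory into its trivial and $u_1+u_2$-twisted summands, identifies $\ktheory{}{\stein}{*}{\eub{\stein}}$ in terms of the groups of Theorem \ref{theoremtwistedktheory} and the untwisted groups $K^{*}_{\sldrei}(\eub{\sldrei})$ of \cite{Sanchez-Garcia(2006SL)}. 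This makes the completion a computation starting from explicitly known finitely generated abelian groups.

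The main obstacle is reconciling the $I_{\stein}$-adic completion, which a priori sees all primes, with the purely $2$-local answer of Tezuka--Yagita: one must verify that the odd-primary (in particular $3$-primary) information in $H^{*}(\stein)$ contributes nothing to $K^{*}(B\stein)$ beyond what is recorded $2$-locally, so that the completion is carried entirely by the $2$-adic directions of the augmentation filtration. This is where the multiplicative structure of the augmentation ideal $I_{\stein}$ must be used, as in the model case $K^{0}(B\IZ/2)\cong\IZ\oplus\IZ_{\hat 2}$: the free summand lying along the augmentation direction survives as the torsion-free $\IZ_{(2)}$-piece, whereas the remaining free summands complete to copies of $\IZ_{\hat 2}$, matching the ranks $6$ and $1$ in Theorem \ref{tezukayagita}. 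Once this bookkeeping of the augmentation filtration is checked, the corollary follows immediately by reading off Theorem \ref{tezukayagita} through the Atiyah--Segal isomorphism.
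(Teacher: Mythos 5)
Your proof takes essentially the same route as the paper, whose entire argument is the juxtaposition of the Atiyah--Segal completion theorem (Theorem \ref{theorem atiyahsegal}, applicable because Soul\'e's cocompact model for $\eub{\sldrei}$ serves, via the central extension, as a finite model for $\eub{\stein}$) with the Tezuka--Yagita computation of Theorem \ref{tezukayagita}. The additional care you take --- verifying the finiteness hypothesis explicitly, identifying the uncompleted groups through the untwisting corollary, and flagging that the $I_{\stein}$-adic completion a priori sees odd primes while Theorem \ref{tezukayagita} is stated only after localization at $2$ --- goes beyond the paper's one-line proof, which silently inherits that $2$-local caveat in the statement of the corollary.
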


\bibliographystyle{alpha}
\bibliography{twistedrefrito}

\def\cprime{$'$}
\begin{thebibliography}{BEUV13}

\bibitem[BCH94]{baumconnes}
Paul Baum, Alain Connes, and Nigel Higson.
\newblock Classifying space for proper actions and {$K$}-theory of group
  {$C^\ast$}-algebras.
\newblock In {\em {$C^\ast$}-algebras: 1943--1993 ({S}an {A}ntonio, {TX},
  1993)}, volume 167 of {\em Contemp. Math.}, pages 240--291. Amer. Math. Soc.,
  Providence, RI, 1994.

\bibitem[BdlH13]{delaharpebekka}
Bachir Bekka and Pierre de~la Harpe.
\newblock Groups with faithful irreducible projective unitary representations.
\newblock {\em Forum Math.}, 25(3):505--531, 2013.

\bibitem[BEJU14]{BEJU2013}
No{\'e} B{\'a}rcenas, Jes{\'u}s Espinoza, Michael Joachim, and Bernardo Uribe.
\newblock Universal twist in equivariant {$K$}-theory for proper and discrete
  actions.
\newblock {\em Proc. Lond. Math. Soc. (3)}, 108(5):1313--1350, 2014.

\bibitem[BEUV13]{barcenasespinozauribevelasquez}
N.~Barcenas, Espinoza, B.~Uribe, and M.~Velasquez.
\newblock Segal's spectral sequence in twisted equivariant {$K$} theory for
  proper actions.
\newblock preprint, arXiv:1307.1003 [math.AT], 2013.

\bibitem[BV14]{BAVE2013}
No{\'e} B{\'a}rcenas and Mario Vel{\'a}squez.
\newblock Twisted equivariant {$K$}-theory and {$K$}-homology of {${\rm
  Sl}_3\Bbb Z$}.
\newblock {\em Algebr. Geom. Topol.}, 14(2):823--852, 2014.

\bibitem[CE01]{echterhoffchabert}
J{\'e}r{\^o}me Chabert and Siegfried Echterhoff.
\newblock Twisted equivariant {$KK$}-theory and the {B}aum-{C}onnes conjecture
  for group extensions.
\newblock {\em $K$-Theory}, 23(2):157--200, 2001.

\bibitem[DL98]{davisluck}
James~F. Davis and Wolfgang L{\"u}ck.
\newblock Spaces over a category and assembly maps in isomorphism conjectures
  in {$K$}- and {$L$}-theory.
\newblock {\em $K$-Theory}, 15(3):201--252, 1998.

\bibitem[Dwy08]{dwyer2008}
Christopher Dwyer.
\newblock Twisted equivariant {$K$}-theory for proper actions of discrete
  groups.
\newblock {\em $K$-Theory}, 38(2):95--111, 2008.

\bibitem[Ech08]{echterhoff}
Siegfried Echterhoff.
\newblock The {$K$}-theory of twisted group algebras.
\newblock In {\em {$C^\ast$}-algebras and elliptic theory {II}}, Trends Math.,
  pages 67--86. Birkh\"auser, Basel, 2008.

\bibitem[EEK08]{echterhoffemersonkim}
Siegfried Echterhoff, Heath Emerson, and Hyun~Jeong Kim.
\newblock {$KK$}-theoretic duality for proper twisted actions.
\newblock {\em Math. Ann.}, 340(4):839--873, 2008.

\bibitem[Kar94]{karpilovsky}
Gregory Karpilovsky.
\newblock {\em Group representations. {V}ol. 3}, volume 180 of {\em
  North-Holland Mathematics Studies}.
\newblock North-Holland Publishing Co., Amsterdam, 1994.

\bibitem[LO01]{lueckolivercompletion}
Wolfgang L{\"u}ck and Bob Oliver.
\newblock The completion theorem in {$K$}-theory for proper actions of a
  discrete group.
\newblock {\em Topology}, 40(3):585--616, 2001.

\bibitem[Moo64]{mooreextensions}
Calvin~C. Moore.
\newblock Extensions and low dimensional cohomology theory of locally compact
  groups. {I}, {II}.
\newblock {\em Trans. Amer. Math. Soc.}, 113:40--63, 1964.

\bibitem[Moo68]{moorepadical}
Calvin~C. Moore.
\newblock Group extensions of {$p$}-adic and adelic linear groups.
\newblock {\em Inst. Hautes \'Etudes Sci. Publ. Math.}, (35):157--222, 1968.

\bibitem[MV03]{valettemislin}
Guido Mislin and Alain Valette.
\newblock {\em Proper group actions and the {B}aum-{C}onnes conjecture}.
\newblock Advanced Courses in Mathematics. CRM Barcelona. Birkh\"auser Verlag,
  Basel, 2003.

\bibitem[SG08]{Sanchez-Garcia(2006SL)}
Rub{\'e}n S{\'a}nchez-Garc{\'{\i}}a.
\newblock Bredon homology and equivariant {$K$}-homology of {${\rm SL}(3,{\Bbb
  Z})$}.
\newblock {\em J. Pure Appl. Algebra}, 212(5):1046--1059, 2008.

\bibitem[Sou78]{Soule(1978)}
Christophe Soul{\'e}.
\newblock The cohomology of {${\rm SL}_{3}({\bf Z})$}.
\newblock {\em Topology}, 17(1):1--22, 1978.

\bibitem[Ste68]{steinberg}
Robert Steinberg.
\newblock {\em Lectures on {C}hevalley groups}.
\newblock Yale University, New Haven, Conn., 1968.
\newblock Notes prepared by John Faulkner and Robert Wilson.

\bibitem[TY92]{tezukayagita}
M.~Tezuka and N.~Yagita.
\newblock Complex {$K$}-theory of {$B{\rm SL}_3({\bf Z})$}.
\newblock {\em $K$-Theory}, 6(1):87--95, 1992.

\bibitem[vdK75]{vanderkallen}
Wilberd van~der Kallen.
\newblock The {S}chur multipliers of {${\rm SL}(3,{\bf Z})$} and {${\rm
  SL}(4,{\bf Z})$}.
\newblock {\em Math. Ann.}, 212:47--49, 1974/75.

\end{thebibliography}
\end{document}